 \definecolor{purple}{rgb}{.8,0,.8}
\def\Section#1#2{\section{#2.}\label{#1}\setcounter{equation}{0}}
\def\sect#1{Section~\ref{#1}}
\newtheorem{theorem}{Theorem}
\newtheorem{proposition}[theorem]{Proposition}
\newtheorem{definition}[theorem]{Definition}
\newtheorem{example}[theorem]{Example}
\newcommand{\pf}{\begin{proof}}
\newcommand{\foorp}{\end{proof}}
\def\Rmk#1\par{\medskip
\emph{Remark}: \ #1 \par \medskip}
\newcommand{\pd}[2]{\frac{\partial#1}{\partial #2}}
\newcommand{\beq}{\begin{equation}}
\newcommand{\eeq}{\end{equation}}
\def\eq#1{{\rm (\ref{#1})}}
\def\eqf{formula~\eq}
\def\ostrut#1#2{\hbox{\vrule height #1pt depth #2pt width 0pt}}
\chardef\itxt="10             % for use if \i is redefined
\chardef\otxt="1C             % for use if \o is redefined
\chardef\Otxt="1F             % for use if \O is redefined
\def\vtxt#1{{\accent"14 #1}}  % for use if \v is redefined
\def\hexnumber#1{\ifcase#1 0\or1\or2\or3\or4\or5\or6\or7\or8\or9\or
 A\or B\or C\or D\or E\or F\fi}
\edef\msbhx{\hexnumber\symAMSb}   % for use with amsfonts.sty
\mathchardef\emptyset="0\msbhx3F
\newif\ifnames \namesfalse
\def\namez#1;{\namesfalse\namezz#1,;} 
\def\namezz#1,#2,#3;{\ifx,#3,\ifnames \ and\fi\fi #1,#2,\ifx,#3,\else 
    \namestrue\namezz #3;\fi}
\def\key#1 #2\par{\bibitem{#1} #2\par}
\def\book#1;#2;#3\par{\namez#1;{\it #2},#3.}
\def\paper#1;#2;#3; #4(#5)#6\par{\namez#1;#2,{\it #3}\if a#4, to appear.\else
     { \bf #4}(#5),#6.\fi}
\def\inbook#1;#2;#3;#4\par{\namez#1;#2, {\sl in\/}:{\it#3},#4.}
\def\appx#1;#2;#3;#4\par{\namez#1;#2, {\sl appendix in\/}:{\it #3},#4.\par }
\def\thesis#1;#2;#3\par{\namez#1;{\it #2}, Ph.D.~Thesis,#3.}
\def\uthesis#1;#2;#3\par{\namez#1;{\it #2}, Ph.D.~Thesis, University of Minnesota,#3.}
\def\preprint#1;#2;#3\par{\namez#1;#2, preprint,#3.}
\def\upreprint#1;#2;#3\par{\namez#1;#2, preprint, University of Minnesota,#3.}
\def\prepare#1;#2;#3\par{\namez#1;#2, in preparation.}
\def\personal#1;#2\par{\namez#1; personal communication,#2.}
\def\submit#1;#2;#3\par{\namez#1;#2, submitted.}
\def\other#1\par{#1.}
\def\nosemiz#1;{#1}
\def\rf#1{\rfzzz#1;\rfzzz}
\def\rfzzz#1;#2\rfzzz{\ifx;#2;\cite{#1}\else\cite[\nosemiz\ignorespaces#2]{#1}\fi}
\def\eg{e.g.,~}
\def\Eq#1$$#2$${\StEq#1  \EnEq{#2}}
\def\StEq#1 #2\EnEq#3{\begin{equation}\label{#1} #3\end{equation}}
\newdimen\eqjot \eqjot = 1\jot
\def\openupeq{\openup \the\eqjot}
\def\addtab#1={#1\;&=}
\def\caeq#1#2{{\def\\{\cr}\vcenter{\openupeq \halign
    {$\hfil\displaystyle ##\hfil$&&$\hfil\hskip#1pt\displaystyle ##\hfil$\cr#2\cr}}}}
\def\ceq{\caeq{20}}  
\def\iaeq#1#2#3{{\def\\{\cr\hskip#1pt}\vcenter{\openupeq\halign{$\displaystyle
   ##\hfil$&&\hskip#2pt$\displaystyle##\hfil$\cr #3\cr}}}}
\def\ibeq#1{\iaeq{#1}{#1}}
\def\saeq#1#2{{\def\\{\cr}\vcenter{\openup1\jot \halign{$\displaystyle
   ##\hfil$&&\hskip#1pt$\displaystyle##\hfil$\cr #2\cr}}}}
\def\sxeq{\saeq{30}} 
\def\seq{\saeq{20}}  
\def\ezeq#1#2#3{{\def\\{\cr#1}\vcenter{\openupeq \halign{$\displaystyle 
   \hfil##$&$\displaystyle##\hfil$&&\hskip#2pt$\displaystyle##\hfil$\cr#1#3\cr}}}}
\def\eaeq{\ezeq\addtab}
\def\eeqo{\eaeq{20}}  
\def\qaeq#1#2{{\def\\{&}\vcenter{\openupeq\halign{$\displaystyle
   ##\hfil$&&\hskip#1pt$\displaystyle##\hfil$\cr #2\cr}}}}
\def\qqeq{\qaeq{40}} 
\def\qxeq{\qaeq{30}} 
\def\qeq{\qaeq{20}} \def\weq{\qaeq{15}}
\def\xeq{\qaeq{10}}
\def\ro#1{{\rm #1}} 
\def\:{\mskip2mu}
\def\xbox#1{\ \mbox{#1} \ }
\def\qbox#1{\quad \mbox{#1} \quad}
\def\qqbox#1{\qquad \mbox{#1} \qquad}
\def\f#1{{\textstyle \frac1{#1}}} \def\fr#1#2{{\textstyle \frac{#1}{#2}}}
\def\ophantom#1#2{\setbox0=\hbox{$#1#2$}\setbox2 = \null
                 \ht2 = \ht0 \dp2 = \dp0 \box2}
\def\odphantom#1{\ophantom\displaystyle{#1}}
\def\set#1#2{\left \{\,#1\odphantom{#2}\;\right | \left .\;#2\odphantom{#1}\,\right \}}
\def\comp{\raise 1pt \hbox{$\,\scriptstyle\circ\,$}}
\mathchardef\oplussy="2208
\def\oplus{\raise 1pt \hbox{$\,\scriptscriptstyle\oplussy\,$}}
\def\longmapstox{\ \longmapsto\ } 
\def\barr#1{\overline{#1}{}}
      \let\Pa\Parens
  \let\bc\braces
             \let\bbk\bbrackets
 \let\abs\norm
\def\Summ#1#2#3{\sum _{#1\:=\:#2}^{#3}\>}
\def\Sum#1#2{\Summ{#1}1{#2}}  
\def\pd#1#2{\mathchoice{\frac{\partial #1}{\partial #2}}{\partial #1/\partial #2}{\partial #1/\partial #2}{\partial #1/\partial #2}}
\def\pdo#1{\mathchoice{\frac{\partial}{\partial #1}}{\partial /\partial #1}{\partial /\partial #1}{\partial /\partial #1}}
\def\rg#1#2{#1=1,\ldots,#2} 
\def\subs #1#2{#1_1,\ldots,#1_{#2}}
\def\sups #1#2{#1^1,\ldots,#1^{#2}}
\def\psups #1#2{(#1^1,\ldots,#1^{#2})}
\def\at#1{|_{#1}}  
\newcommand{\R}{\mathbb{R}}
\def\GL#1{\ro{GL}(#1)}
\def\SL#1{\ro{SL}(#1)}
\def\PGL#1{\ro{PGL}(#1)}
\def\AG#1{\ro{A}(#1)}  
\def\SA#1{\ro{SA}(#1)}
\def\Diff{\CD\hbox{\it i\char'13}\,}
\def\Upstar{^{\displaystyle *}}
\def\ps#1{^{(#1)}}
\def\pss#1{^{(#1)\,\displaystyle *}}
\def\M{M}    \def\m{m}
\def\N{N} \def\n{n} 
\def\Ih{\widehat I} \def\Fh{\widehat F}
\def\giso#1{G\Upstar _{#1}}
\def\GN{\giso \N} \def\Gt{\widetilde G}
\def\Gw{G_\w} \def\GS{G_S}
\def\Cp{C} \def\C{\widehat C}
\def\vb{{\mathbf v}} 
\def\b{{\mathbf b}}   
\def\c{{\mathbf c}} \def\ch{{\widehat \c}}
\def\P{\Pi} \def\Pk{\P\ps k}    
\def\Pks{\P\pss k}    
\def\Po{{\displaystyle \P^{\,}_0}} \def\Pok{{\displaystyle \P\ps k_0}}
\def\Pc{{\displaystyle \P^{\,}_\ch}} \def\Pck{{\displaystyle \P\ps k_\ch}}
\def\Pb{{\displaystyle \P^{\,}_\b}} 
\def\Tc{{\displaystyle T^{\,}_\c}}
\def\F{\mathcal F}
\def\GP{{\cal G}_{\P}}
\def\Cc#1{{\rm C}^{#1}} \def\Cci{\Cc \infty }
\def\J#1{{\rm J}^{#1}}  \def\Jk{\J k} \def\Jl{\J l} \def\Ji{\J\infty }
\def\j#1{{\rm j}_{#1}}  \def\jk{\j k}  
\def\Jt#1{{\displaystyle {\rm J}_{\P}^{#1}}} 
  \def\Jtk{\Jt k} 
\def\Jto#1{{\displaystyle {\rm J}_{\Po}^{#1}}} 
  \def\Jtok{\Jto k} 
\def\piH{\pi_H} \def\dH{d_H} \def\dV{d_V}
\def\inv{\iota} 
\def\invh{\widehat \inv\,}
\def\invt{\widetilde \inv\,}
\def\K{\mathcal{K}} 
\def\Kt{\widetilde{\K}}
\def\Kh{\widehat{\K}}
\def\U{\mathcal{U}} \def\Uh{\widehat \U}
\def\cF{\Cci} 
\def\cFU{\cF(\U)} \def\cFUG{\cFU^G}
\def\cFUh{\cF(\Uh)} \def\cFUhG{\cFUh^G}
\def\CD{\mathcal{D}} 
\def\CDh{\widehat{\CD}}
\def\CE{\CDh}
\def\gk{g\ps k} \def\Gk{G\ps k} 
\def\z{z} \def\zk{\z\ps k} 
\def\zt{\widetilde \z} 
\def\w{v} \def\wk{\w\ps k}  
\def\v{u}   
\def\y{w} \def\yk{\y\ps k}  
\def\rr{w}
\def\lomega{\widehat\omega }
\def\dom{\ro{dom}\,} \def\ker{\ro{ker}\,} \def\rank{\ro{rank}\,}
\def\Ad{\ro{Ad}\,}  
\def\bp#1#2#3{[\:#1,#2,#3\:]}
\def\tp#1#2#3{#1\cdot #2 \times #3}
\def\kh{\widehat \kappa} \def\th{\widehat \tau} \def\bh{\widehat \beta}
\def\prcurv{\eta} \def\prcurvh{\widehat\eta} \def\pral{\xi}
\def\eacurv{\mu}  \def\eaal{\chi}
\def\affcurv{\nu} \def\affcurvh{\widehat\nu} \def\affal{\varrho}
\def\ihZ{\widehat \zeta}
\def\itZ{\widetilde \zeta}
\def\caal{\sigma} 
\def\pbal{{\widehat\xi}}
\def\A{A}  \def\B{B}
\def\ah{\widehat \alpha} \def\bh{\widehat \beta}
\def\orbit{\mathcal O}
\newcommand{\RP}{\mathbb{RP}}
\def\Fhat{\widehat F}
\begin{document}

%\numberwithin{equation}{section}

\title{Invariants of objects and their images \break under surjective maps}

\maketitle

\begin{center}
\begin{tabular*}{.95\textwidth}{@{\extracolsep{\fill}} ll}
Irina A.~Kogan\footnotemark[1] & Peter J. Olver\footnotemark[2]  \\
Department of Mathematics & School of Mathematics \\
North Carolina State University&University of Minnesota \\
Raleigh, NC \quad 27695&Minneapolis, MN \quad 55455 \\
{\tt iakogan@ncsu.edu} & {\tt olver@umn.edu} \\
\url{http://www.math.ncsu.edu/~iakogan} &\url{http://www.math.umn.edu/~olver}
\end{tabular*}
\end{center}

\footnotetext[1]{{\it Supported in part by NSF Grants CCF 13--19632 and DMS 13--11743.}}
\footnotetext[2]{{\it Supported in part by NSF Grant DMS 11--08894.}}

\baselineskip=16pt
\vskip5mm
\noindent{\bf Abstract:} We examine the relationships between  the differential invariants  of objects and of their images  under a surjective map. We analyze  both the case when the underlying transformation group is projectable and hence induces an action on the image,  and the case when only a proper subgroup of the entire group acts projectably. In the former case, we  establish a  constructible isomorphism between the algebra of differential invariants of the images and the algebra of fiber-wise constant (gauge) differential invariants of the objects. In the latter case, we describe residual effects of the   full transformation group on the image invariants.  Our motivation comes from the problem of reconstruction of an object from multiple-view images, with  central and parallel projections of curves  from three-dimensional space to the two-dimensional plane serving as our main examples. 

\vskip.3in

\Section1{Introduction}

The subject of this paper is the behavior of invariants and, particularly, differential invariants under surjective maps.  While our theoretical results are valid for manifolds of arbitrary dimension, the motivating examples are central and parallel projections from three-dimensional space onto the two-dimensional plane, as prescribed by simple cameras. 
We concentrate on the effect of such projections on space curves, leaving the analysis of surfaces to subsequent investigations.  We will, in particular, derive relatively simple formulas relating the centro-affine invariants of a space curve, as classified in \cite{Ocentro}, to the projective curvature invariant of its projections.

The relationship between three-dimensional objects and their two-dimensional images under projection is a problem of major importance in image processing, and covers a broad spectrum of fundamental issues in computer vision, including stereo vision, structure from motion, shape from shading, projective invariants, etc.; see, for example, \cite{ArStSt,Bertalmio,CipGib, Faugeras,HannHick,MSKS,Sapiro}.  Our focus on differential invariants is motivated by the method of differential invariant signatures, \cite{COSTH}, used to classify objects up to group transformations, including rigid motions, and equi-affine, affine, centro-affine, and projective maps.  Our analysis is founded on the method of equivariant moving frames, as first proposed in \cite{FOmcII}, and we will assume that the reader is familiar with the basic techniques.  See \cite{Mansfield, Omfl} for recent surveys of the method and many of its applications.  In \cite{HubertKogan-focm, HubertKogan-jsc}, an algebraic interpretation of the equivariant moving frame was developed, leading to an algorithm for constructing a generating  set of rational invariants along with a set of algebraic invariants, that exhibit the replacement property.

A key problem in mathematics, arising, for example, in geometry, invariant theory, and symmetry analysis, and of fundamental importance for object recognition in image processing, is the \emph{equivalence problem}, that is, determining when two objects in a space can be mapped to each other by a transformation belonging to a prescribed group or pseudo-group action.
\'Elie Cartan's solution to the equivalence problem for submanifolds under transformation groups, \cite{Cartanequiv}, is based on the functional interrelationships among the associated differential invariants.  Cartan's result was reformulated through the introduction of the \emph{classifying submanifold}, \cite E, subsequently --- motivated by the extensive range of applications in image processing --- renamed the \emph{differential invariant signature}, \cite{COSTH}.  The signature of a submanifold is parametrized by a finite number of fundamental differential invariants\footnote{Identification of the required differential invariants can be facilitated and systematized through the equivariant moving frame calculus and, specifically, the recurrence formulae, \cite{FOmcII, Mansfield, Omfl}.  The case of curves is straightforward.} and one proves that two sufficiently regular submanifolds are locally equivalent under a group transformation if and only if their signatures are identical.  The symmetries of a submanifold can also be classified by the dimension and, in the case of discrete symmetries, the index of its associated signature. 

Differential invariant signatures of families of curves were used  in \cite{BurdisKogan, BuKoHo} to establish a novel algorithm  for solving the object-image correspondence problem for curves under projections.
  Extensions of the method to signatures parametrized by joint invariants and joint differential invariants, also known as semi-differential invariants, \cite{MPVO}, can be found in \cite{Oji}.  A wide range of image processing applications includes jigsaw puzzle assembly, \cite{HOpuzzle}, recognition of DNA supercoils, \cite{ShLlP}, distinguishing malignant from benign breast cancer tumors, \cite{GrimShak},
recovering structure of three-dimensional objects from motion, \cite{BazBou}, classification of projective curves in visual recognition,
\cite{HannHick}, and construction of integral invariant signatures for object recognition in 2D and 3D images, \cite{FengKoganKrim}. Further applications of the moving frame-based signatures include classical invariant theory, \cite{BOpol, Kogan-thesis, KoganMM,I}, symmetry and equivalence of polygons and point configurations, \cite{Boutinpol,KemperBoutin}, 
geometry of curves and surfaces in homogeneous spaces,
with applications to Poisson structures and integrable systems, \cite{MBp,MBhs}, the design and analysis of geometric integrators and symmetry-preserving numerical schemes, \rf{KimO,MKLun,RebVal}, the determination of Casimir invariants of Lie algebras and the classification of subalgebras, with applications in quantum mechanics, \cite{BoPaPo}, and many more.

In our analysis of the behavior of invariants under surjective maps, we will concentrate on finite-dimensional Lie group actions, although our analysis can, in principle, be extended to infinite-dimensional Lie pseudo-groups, using the techniques developed in \cite{OP2,OP3}.  We will distinguish between projectable group actions, in which the group transformations respect the surjective map's fibers, and the more general non-projectable actions.   In the former case, there is a naturally defined action of a certain quotient group on the image manifold, and we are able to directly relate the differential invariants and, hence, the differential invariant signatures of submanifolds and their projected images.    

However, in the image processing applications we are primarily interested in the case when only a (fairly large) subgroup of the full transformation group acts projectably, and thus we need to extend our analysis to non-projectable group actions.  In this situation, one distinguishes a projectably acting subgroup, along with its corresponding projected action and invariants on the image manifold. 
Then the full transformation group will have a residual effect on the image invariants and signatures, which are no longer fully invariant, and hence the comparison of the projected images must take this into account.
  For example, in the case of central  projection based at the origin, from three-dimensional space to the two-dimensional plane, the ``centro-affine'' action of the general linear group $\GL3$ is projectable, and this leads to
  our formulas relating centro-affine differential invariants to projective differential invariants on the image curve.  On the other hand, translations are not projectable, and thus have a residual effect on the projective invariants that will be explicitly characterized.

%%%%%%
\Section2{Projectable actions: \hfill \break \hglue 1.4in  invariants of objects and images}

%%%%%
In this section, we consider projectable actions of a  Lie group $G$ on a manifold $M$ meaning that they respect the fibers of a surjective map $\P\colon M\to N$.  A projectable action on $M$ induces a natural action on $N$.  We establish an   isomorphism between the algebra of differential invariants for submanifolds on $N$  and  the algebra of fiber preserving (gauge) differential invariants on $M$. This isomorphism allows us to express invariants of the image  of a submanifold $S\subset   M$  in terms of the invariants of $S$.  Since the equivariant moving frame method  \cite{FOmcII,OP2} provides a powerful and algorithmic tool for constructing invariant objects,  we are able to explicitly determine how invariant functions and invariant differential operators on $N$, obtained via this method, are  related to their counterparts on $M$.  

In this paper, all objects --- manifolds, submanifolds, Lie groups, maps, differential forms, etc. --- are assumed to be smooth, meaning of class $\Cci$.

\subsection{Transformation groups}
%%%%

Let $G$ be a Lie group (or, more generally, a Lie pseudo-group, \cite{OP3}) acting on a smooth manifold $\M $. In this paper, many of the actions that we consider are \emph{local actions}, although we will usually omit the word \emph{local} when we describe them.

\begin{definition} \label{iso} \rm
The \emph{isotropy subgroup} of a subset $S\subset \M$ consists of the group elements which fix it: 
$$\GS = \set{g\in G}{g\cdot S = S}.$$
The \emph{global isotropy subgroup} of $S$ consists of the group elements which fix \emph{all} points in $S$\/:
$$
\giso S = \bigcap_{z\in S} G_z = \set{g\in G}{g\cdot z = z \xbox{ for all } z \in S}.
$$
\end{definition}

In particular, the global isotropy subgroup $\giso \M$ of $\M$ is a normal subgroup of $G$.  The action of $G$ is \emph{effective} if and only if $\giso \M = \{e\}$ is trivial.  More generally, the action of $G$ induces an equivalent effective action of the quotient group $G/\giso \M$ on $\M$, \rf E.

The following group actions will play a prominent role in our examples.  Each matrix $A\in \GL n$ produces an invertible linear transformation  $\z \mapsto A\,\z$ for $\z \in \R^n$.    More generally, we consider the action of the \emph{affine group} $\AG n = \GL n \ltimes \R^n$ given by $\z \mapsto A\,\z + b$ for $A \in \GL n$, $b \in \R^n$.  This action forms the foundation of affine geometry, and, for this reason, the previous linear action of $\GL n$ is sometimes referred to as the \emph{centro-affine group}, underlying centro-affine geometry, \cite{Cruceanu, Ocentro}. We also consider the action of the projective group $\PGL n=\GL n/ \set{ \lambda \, \mathrm I}{ 0 \ne \lambda \in \R}$ on the projective space $\RP^{n-1}$ along with its local, linear fractional action on the dense open subset $\R^{n-1} \subset \RP^{n-1}$ obtained by omitting the points at infinity.
 
\medskip
 
\emph{Warning}: In many references, ``affine geometry'' really refers to ``equi-affine geometry'' whose underlying transformation group is the \emph{special affine} or \emph{equi-affine group} $\SA n = \SL n \ltimes \R^n$ consisting of oriented volume-preserving transformations: $\z \mapsto A\,\z + b$ with $\det A = 1$.  We also use the term \emph{centro-equi-affine geometry} to indicate the linear volume-preserving action, $\z \mapsto A\,\z$ with $\det A = 1$, of the special linear group $\SL n$ on $\R^n$. 

\medskip

%%%%%
\subsection{Projectable actions}

Our principal object of study is the behavior of group actions under a surjective map $\P \colon \M \to \N $ of constant rank from a manifold $\M$ onto a manifold $\N$ of lower dimension: $\n = \dim \N < \m = \dim \M$.  Given $\w\in \N $, let $\F_\w =\P ^{-1}\{\w\} \subset \M $ denote its preimage, called the \emph{fiber} of $\P $ over $\w$. In many examples, $\M $ is, in fact, a fiber bundle over $\N $, but we do not require this in general.  The kernel of the map's surjective differential $d \P \colon T\M \to T\N$ is  the tangent space to the fiber: $T \F_\w\at z  = \ker d\Pi \at z \subset T\M\at z$, where $\w = \Pi(z)$. 

To begin with, we will consider group actions that are compatible with the surjective map in the following sense.
%%

%%%

\begin{definition}\label{def-proj}  \rm A group action of $G$ on $\M $ is called \emph{projectable} under the surjective map $\P \colon \M \to \N $ if, for all $\w\in \N $ and  for all $g\in G$, there exists $\barr \w\in \N $ such that $g \cdot \F_\w =\F_{\barr \w}$.
\end{definition}

In other words, the action of $G$  is projectable if and only if  it maps fibers to fibers.
In this case, it is clear that the induced map $\w \mapsto \barr \w = g\cdot \w$ is a well-defined action of $G$ on $\N $, satisfying 
\beq\label{eq-star} g\cdot \w = \P (g \cdot \F_\w).\eeq
As above, we define the \emph{global isotropy subgroup}
\beq \label{eq-gis}
\eeqo{\GN =\set{g\in G}{g\cdot \w =\w \xbox{ for all } \w\in \N }\\
= \set{g\in G}{g \cdot \F_\w =\F_\w \xbox{ for all } \w\in \N }=\bigcap_{\w\in \N }   \Gw,}
\eeq
where 
$$\Gw =\set{g\in G}{g\cdot \w =\w}=\set{g\in G}{g \cdot \F_\w =\F_\w}$$
is the \emph{stabilizer} or \emph{isotropy subgroup} of the point $\w\in \N $. 
The action of $G$ on $\N $ induces an equivalent, effective action of the quotient group 
$$[G] =G/\GN$$
 on $\N $.  We use the notation $[g] =g\,\GN  \in [G]$ to denote the element of the quotient group corresponding to $g \in G$.

By a \emph{$G$-invariant} function, we mean a real-valued function $J \colon \M  \to \R$ that is unaffected by the group action, so $J(g \cdot \z) = J(\z)$ for all $g \in G$ and all $\z \in \dom J$ such that $g\cdot \z \in \dom J$.  (Our notational conventions allow $J$ to only be defined on an open subset $\dom J \subset \M $.  Also, if the action of $G$ is local, one only requires the invariance condition to hold when $g \cdot z$ is defined and in the domain of $J$.)  Clearly a function is $G$-invariant if and only if it is constant on the orbits of $G$.  In particular, when $\M$ is connected and $G$ acts transitively, then there are no non-constant invariants.
On occasion, one relaxes the preceding definition, by only imposing invariance for group elements sufficiently close to the identity, leading to the concept of a \emph{local invariant}.
The correspondence between $[G]$-invariant functions on $\N $ and $G$-invariant functions on $\M $ follows straightforwardly from \eq{eq-star}.
%%%%

\begin{theorem}\label{th-inv} Let $\P \colon \M \to \N $ be a surjective map, and suppose that $G$ acts projectably on $\M $. 
If $I\colon \N \to \R$ is a $[G]$-invariant function on $\N $, then $\widehat I= I\comp \P \colon \M \to \R$ is a $G$-invariant function on $\M $.  Conversely, any $G$-invariant function $\widehat I\colon \M \to \R$ that is constant on the fibers of $\P$ induces a  $[G]$-invariant function $I\colon \N \to \R$ such that  $\widehat I = I\comp \P$.
\end{theorem}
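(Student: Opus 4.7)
The plan is to verify both implications by direct unwinding of the definitions, with the key observation being that projectability of the action is exactly the statement that $\Pi$ is $G$-equivariant with respect to the induced action on $N$. Indeed, \eq{eq-star} together with $z \in \F_w \Leftrightarrow \P(z) = w$ implies that for every $g \in G$ and every $z \in \M$,
\beq\label{eq-equiv}
\P(g\cdot z)\;=\;g\cdot \P(z),
\eeq
since $z \in \F_{\P(z)}$ forces $g\cdot z \in g\cdot \F_{\P(z)} = \F_{g \cdot \P(z)}$. This single identity will drive both halves of the argument.

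For the forward direction, I would assume $I\colon \N \to \R$ is $[G]$-invariant, set $\Ih = I \comp \P$, and compute
$$
\Ih(g\cdot z) \;=\; I(\P(g\cdot z)) \;=\; I(g\cdot \P(z)) \;=\; I([g]\cdot \P(z)) \;=\; I(\P(z)) \;=\; \Ih(z),
$$
using \eq{eq-equiv} and the fact that by construction the $G$-action on $\N $ factors through the quotient $[G] = G/\GN$. This establishes $G$-invariance of $\Ih$, and by construction $\Ih$ is constant on fibers of $\P$.

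For the converse, I would suppose $\Ih\colon \M \to \R$ is $G$-invariant and constant on fibers, and define $I\colon \N \to \R$ by $I(\w) = \Ih(z)$ for any $z \in \F_\w$; fiber-constancy of $\Ih$ makes this well defined, and $\Ih = I\comp \P$ holds tautologically. To check $[G]$-invariance of $I$, pick $\w\in \N $, choose any $z \in \F_\w$, and use \eq{eq-equiv} together with $G$-invariance of $\Ih$ to get
$$
I(g\cdot \w) \;=\; I(\P(g\cdot z)) \;=\; \Ih(g\cdot z) \;=\; \Ih(z) \;=\; I(\w).
$$
Since this holds for every $g \in G$ and the induced $\N $-action factors through $[G]$, one concludes $I([g]\cdot \w) = I(\w)$, as required.

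There is really no technical obstacle here: the only point that merits care is making sure the induced action on $\N $ is well defined independently of the representative in the coset $[g]$, but this is already built into the definition of $\GN$ in \eq{eq-gis} --- elements of $\GN$ fix every $\w \in \N $, so any two representatives of $[g]$ produce the same point $g\cdot \w$. The domain/locality caveats noted before Definition~\ref{iso} propagate through the proof without change, since \eq{eq-equiv} continues to hold wherever both sides are defined.
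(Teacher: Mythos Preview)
Your proof is correct and is precisely the unwinding of what the paper has in mind: the text immediately before the theorem states that the correspondence ``follows straightforwardly from \eq{eq-star}'' and gives no further argument, and your derivation of the equivariance identity $\P(g\cdot z) = g\cdot \P(z)$ from \eq{eq-star}, followed by the two direct computations, is exactly that straightforward verification.
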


\subsection{Submanifolds}

Let us now investigate how a projectable group action affects submanifolds and their jets.  We will assume that the submanifolds are immersed, although in many situations one restricts attention to embedded  submanifolds.  Throughout, we fix the dimension $p$ of the submanifolds under consideration, and assume that $1 \leq p < \n = \dim \N  < \m = \dim \M $.

%%%%
\begin{definition} \label{def-Preg} \rm
A $p$-dimensional submanifold $S \subset \M $ is called \emph{$\P $-regular} if its projection $\P (S)$ is a smooth  $p$-dimensional  submanifold of $\N $. 
\end{definition}

%%%%%
Because we are allowing immersed submanifolds, the following transversality condition is both necessary and sufficient for $\P $-regularity.

\begin{proposition}  
A submanifold $S \subset \M $ is {$\P $-regular} if and only if it intersects the fibers of $\P$ transversally\/{\rm :} 
\beq\label{tc}
T_\z S\,\cap\, \ker d\P\:\at \z  =\{0\} \qbox{ for all }  z \in S.
\eeq
\end{proposition}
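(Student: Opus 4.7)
The plan is to reduce both halves of the equivalence to the single statement that $\P|_S\colon S \to \N$ is an immersion everywhere on $S$, and then apply the immersion theorem in each direction.

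The linear-algebraic step is immediate: since $\ker d\P\at\z = T\F_{\P(\z)}\at\z$ (as noted just before Definition~\ref{def-proj}), the restriction of $d\P\at\z$ to the subspace $T_\z S \subset T_\z\M$ has kernel $T_\z S \cap \ker d\P\at\z$. Hence the transversality condition \eq{tc} at $\z$ is equivalent to the injectivity of the differential $d(\P|_S)\at\z\colon T_\z S \to T_{\P(\z)}\N$, which, since $\dim T_\z S = p$, amounts to $d(\P|_S)\at\z$ having full rank $p$. Thus \eq{tc} holding throughout $S$ is equivalent to $\P|_S$ being an immersion on all of $S$.

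The forward implication is then a direct application of the immersion theorem: if $\P|_S$ is an immersion at every $\z \in S$, it realizes $\P(S)$ locally near each $\z$ as a smooth $p$-dimensional immersed submanifold of $\N$, so $S$ is $\P$-regular. The converse is the step requiring the most care, and must use the convention that ``$\P(S)$ is a smooth $p$-dimensional submanifold'' refers to the submanifold structure induced by $\P|_S$; cuspidal parametrizations such as $t \mapsto (t^3, 0)$ show this convention is essential, since otherwise the image could be a smooth submanifold of the correct dimension even when the parametrization fails to be an immersion. Under this convention, the equality of dimensions between $S$ and $\P(S)$ forces $d(\P|_S)\at\z$ to be injective at every $\z \in S$, and \eq{tc} follows.
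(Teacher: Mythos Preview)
The paper does not actually prove this proposition; it is asserted as a standard fact, with the only justification being the sentence immediately preceding it: ``Because we are allowing immersed submanifolds, the following transversality condition is both necessary and sufficient for $\P$-regularity.'' Your argument supplies precisely the details this sentence glosses over: the reduction of \eq{tc} to the statement that $\P|_S$ is an immersion is the heart of the matter, and your explicit discussion of the convention that the smooth structure on $\P(S)$ is the one induced by $\P|_S$ is exactly the content of the paper's phrase ``allowing immersed submanifolds.'' The paper's subsequent remark --- that for \emph{embedded} submanifolds transversality is only necessary, not sufficient, citing knotted curves --- confirms this reading. Your proof is correct and in the same spirit as the paper's one-line justification, just made explicit.
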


Because condition \eq{tc} is local, it is a necessary but not sufficient condition for the image $\P (S)$ of an embedded $p$-dimensional submanifold $S \subset\M$ to be an embedded $p$-dimensional submanifold of $\N$. For example, many embedded curves in $\R^3$, \eg nontrivial knots, can only be projected to plane curves with self-intersections.

Suppose we adopt local coordinates $\z = \psups \z\m $ on $\M $ and $\w = \psups \w\n$ on $\N $.   In terms of these, the surjective map $\w = \P(z)$ has components 
$$\w^i = \P^i\psups \z\m , \qquad \rg in.$$
  If the submanifold $S \subset \M $ is (locally) parametrized by $\z = \z(t) = \z\psups tp$, then its tangent bundle $TS$ is spanned by the basis tangent vectors
$$\vb_i = \Sum a\m \pd{\z^a}{t^i} \pdo{\z^a}\,,\qquad \rg ip.$$
Since
$$d\P(\vb_i) = \Sum k\n \Sum am \pd{\z^a}{t^i} \pd{\P^k}{\z^a} \pdo{\w^k}\,,$$
the transversality condition \eq{tc} holds if and only if the associated $p \times n$ coefficient matrix has maximal rank:
\Eq{maxrank}
$$\rank \Pa{\Sum a\m \pd{\z^a}{t^j} \pd{\P^k}{\z^a}} = p.$$

Often, it will be useful to split the coordinates on $\M$, setting  $\z = (\sups xp,\sups u{m-p})$,  in which the $x$'s will play the role of independent variables and the $u$'s dependent variables. A $p$-dimensional submanifold $S$ that is transverse to the \emph{vertical fibers} $\{x = c\}$, for $c$ constant, can be locally identified as the graph of a function:  $S = \{ (x,u(x))\}$.  Hence, its tangent space $TS$ is spanned by the tangent vectors
\Eq{vi}
$$\vb_i = \pdo{x^i} + \Sum \alpha {m-p} \pd{u^\alpha }{x^i} \pdo{u^\alpha }\,,\qquad \rg ip.$$
In this case, the coefficient matrix \eq{maxrank} reduces to the  $p \times n$ \emph{total derivative matrix}
\beq\label{DP}
D\P = \Pa{D_i\P^k} = \Pa{\pd{\P^k}{x^i} + \Sum \alpha {m-p} \pd{u^\alpha }{x^i}\pd{\P^k}{u^\alpha }} \qbox{where} \rg ip, \ \ \rg k\n,
\eeq
 which, to ensure $\P$-regularity, is again required to have maximal rank: 
\beq\label{DPmax}
\rank D\P = p.
\eeq

%%%%%
\subsection{Jets and differential invariants}
%%%%%

Given $0 \leq k \leq \infty$, let $\Jk(\M ,p)$ be the $k$-th order \emph{extended jet bundle} consisting of equivalence classes of $p$-dimensional submanifolds of $\M $ under the equivalence relation of $k$-th order contact, \rf{O}.  In particular $\J0(\M ,p) = \M $.  When $l \geq k\geq 0$, we use $\pi^l_k \colon \Jl(\M ,p) \to \Jk(\M ,p)$ to denote the standard projection.  

Given a surjective map $\P \colon \M \to \N $, let $\Jtk(\M ,p)\subset \Jk(\M ,p)$ be the open dense subset consisting of $k$-jets of $\P $-regular submanifolds, i.e.~those that satisfy the transversality condition \eq{tc}, or, equivalently, in local coordinates, condition \eq{DPmax}.  Note that transversality defines an open condition on the first order jets, so that $\Jtk(\M ,p) = (\pi^k_1)^{-1} \Jt1(\M ,p)$.
Let $\Pk \colon \Jtk (\M ,p)\to  \Jk (\N ,p)$ denote the induced surjective map on $p$-dimensional submanifold jets, that maps the $k$-jet of a transversal submanifold $S$ at a point $\z \in S$ to the $k$-jet of its image $\P(S)$ at $\w = \P(\z)$.  In other words, if $\zk = \jk S \at \z$ then $\wk = \Pk(\zk) = \jk \P(S) \at {\P(\z)}$.  The fact that $\P$ preserves the condition of $k$-th order contact between submanifolds (which is a simple consequence of the chain rule), means that $\Pk$ is well-defined on $\Jtk (\M ,p)$.

Given the action of $G$ on $\M $, there is an induced action on $p$-dimensional submanifolds, and hence on the jet space $\Jk(\M ,p)$,  called the $k$-th order \emph{prolonged action} and denoted by $\Gk$.  Namely, if $\zk = \jk S \:\at \z \in \Jk(\M ,p)$ is the jet of a submanifold at $z \in S \subset \M $, and $g \in G$, then 
$\gk \cdot \zk = \jk (g \cdot S) \at{g \cdot z}$.  Because diffeomorphisms preserve $k$-th order contact, the action is independent of the choice of representative submanifold $S$, \cite E.

The  action of the quotient group $[G]$ on $\N$ similarly induces a prolonged  action, denoted by $[G]\ps k$, on its $k$-th order submanifold jet bundle $\Jk(\N ,p)$. 
It is not hard to see that the jet bundle projection $\Pk$ respects the prolonged group actions of $\Gk$ on $\Jtk(\M ,p)$ and $[G]\ps k$ on $\Jk(\N ,p)$.  In other words, 
\beq\label{gnzn}
[g]\ps k \cdot \Pk(\zk) = \Pk(\gk \cdot \zk),
\eeq
provided both $\zk ,\ \gk \cdot \zk \in \Jtk(\M ,p)$. Indeed, to verify \eq{gnzn}, just set $\zk = \jk S \at \z$ for some submanifold $S \subset \M $ and use the preceding identifications.
%%%

A real-valued function $\Fh \colon \Jk(\M ,p) \to \R$ is called a  \emph{differential function} of order $k$.  (As before, our conventions allow functions, differential forms, etc., to only be defined on open subsets, so $\dom \Fh \subset \Jk(\M ,p)$.)   A \emph{differential invariant} is a differential function $\Ih \colon \Jk(\M ,p) \to \R$ that is invariant under the prolonged group action: $\Ih(\gk \cdot \zk) = \Ih(\zk)$ whenever both $\zk$ and $\gk \cdot \zk \in \dom \Ih$. In view of \eq{gnzn}, Theorem \ref{th-inv} immediately establishes a correspondence between differential invariants on $\N $ and those on $\M $ under a $\P$-projectable group action.

\begin{theorem} \label{th-dinv} 
Let $\P \colon \M \to \N $ be a surjective map and let $G$ act projectably on $\M $.  If $I\colon \Jk(\N ,p)\to\R$ is a differential invariant for the prolonged action of $[G]$ on $\N $, then $\Ih = I \comp \Pk \colon \Jtk(\M ,p)\to\R$ is a differential invariant for the prolonged action of $G$ on $\M $, with domain $\dom \Ih = \P^{-1}(\dom I)$.
 \end{theorem}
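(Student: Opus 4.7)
The plan is to reduce the statement directly to the equivariance identity \eq{gnzn} together with the assumed invariance of $I$, much as Theorem~\ref{th-inv} reduced invariance on $\M$ to invariance on $\N$ at the level of functions rather than jets. Since the prolonged actions $\Gk$ and $[G]\ps k$ are already in place and the intertwining relation \eq{gnzn} has been established, only a very short calculation is needed, with no moving-frame or coordinate computation required.

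First I would fix $\zk \in \dom \Ih := (\Pk)^{-1}(\dom I) \subset \Jtk(\M,p)$ together with an element $g \in G$ for which $\gk \cdot \zk$ also lies in $\dom \Ih$. By construction $\Pk(\zk)$ and $\Pk(\gk \cdot \zk)$ both lie in $\dom I$, so every term in
\[
\Ih(\gk \cdot \zk) \;=\; I\bigl(\Pk(\gk \cdot \zk)\bigr) \;=\; I\bigl([g]\ps k \cdot \Pk(\zk)\bigr) \;=\; I\bigl(\Pk(\zk)\bigr) \;=\; \Ih(\zk)
\]
is defined: the second equality is \eq{gnzn} applied to the pair $(\gk,\zk)$, and the third is the hypothesized $[G]\ps k$-invariance of $I$ on its domain. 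The endpoints of this chain are precisely the defining condition for $\Ih$ to be a $\Gk$-invariant differential function on $\Jtk(\M,p)$.

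The only thing requiring real care — and the closest the argument comes to an obstacle — is making sure that both ingredients in the displayed chain are genuinely in force. That $\Pk$ is well-defined as a map $\Jtk(\M,p)\to \Jk(\N,p)$ is not automatic, but it has already been noted in the text: $\P$ preserves $k$-th order contact between $p$-dimensional submanifolds by the chain rule, and the transversality condition \eq{tc} that cuts out $\Jtk(\M,p)$ is an open condition preserved under $k$-th order prolongation. The equivariance \eq{gnzn} was likewise verified just above by picking a representative submanifold $S$ with $\zk = \jk S \at \z$ and observing that both sides equal $\jk \P(g\cdot S) \at{\P(g\cdot \z)}$. With these two facts available, the theorem is essentially a one-line consequence; the content lies in having correctly set up $\Jtk(\M,p)$, the projected action of $[G]$, and the projected jet map $\Pk$ so as to make the reduction to Theorem~\ref{th-inv} go through in the prolonged setting.
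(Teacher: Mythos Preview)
Your argument is correct and is exactly the approach the paper takes: it states that, in view of the equivariance relation \eq{gnzn}, Theorem~\ref{th-inv} immediately yields the result, and your displayed chain of equalities is precisely the unpacking of that one-line justification. There is nothing to add or correct.
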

 
 Of course, not every differential invariant on $\M $ arises in this manner.  Indeed, $\Ih = I \comp \Pk$ for some differential invariant $I$ on $\N $ if and only if $\Ih$ is constant along the fibers of the jet projection $\Pk$.  Such differential invariants will be called \emph{gauge invariants}, and we investigate their properties in Section~\ref{gauge}.

%%%%%%%%%%
\subsection{Invariant differential forms and differential operators}
\label{sect-inv-diff-forms}
%%%%%%%%%%%
Turning to differential forms, we assume the reader is familiar with the basic variational bicomplex structure on jet space, \cite{A,FOmcII,KOivb}.  As usual, for certain technical reasons, it is preferable to work on the infinite jet bundle even though all calculations are performed on jet bundles of finite order.  

As above, we introduce local coordinates $\z = (x,u) = (\sups xp,\sups u{m-p})$ on $\M$, where the $x$'s represent independent variables. The differential one-forms on $\Ji(\M ,p)$ then split into \emph{horizontal forms}, spanned by $\sups{dx}p$, and \emph{contact forms}, which are annihilated when restricted to a prolongation of any $p$-dimensional submanifold on $M$.   
  The induced splitting of the differential $d = \dH + \dV$ into horizontal and vertical (contact) components endows the space of differential forms on $\Ji(\M ,p)$ with the powerful \emph{variational bicomplex} structure, playing important role in geometric study of differential equations, variational problems, conservation laws, characteristic classes, etc. 

\Rmk While the contact component is intrinsic, the horizontal forms, and hence the induced splitting, depend upon the choice of independent variable local coordinates.  A more intrinsic approach is based on filtrations and the $\cal C$ spectral sequence, \cite{VinoI,VinoII}; however, this extra level of abstraction is unnecessary in what follows. 

We use $\piH$ to denote the projection of a one-form onto its horizontal component, so $\dH \Fh = \piH(d\Fh)$ for any differential function $\Fh \colon \Ji(M,p) \to \R$. 
The symbol $\equiv$ is used to indicate equivalence modulo the addition of contact forms, so that $\omega \equiv \piH(\omega )$; thus, we mostly only display the horizontal components of the pulled-back forms.

Let $\P \colon \M \to \N $ be a surjective map.  
Let $\sups yp$ denote a subset of the local coordinates $v^1,\dots,v^n$ that we consider as independent variables.
The corresponding horizontal forms $\sups{dy}p$ on $\Ji(\N ,p)$ are pulled-back by $\w = \P(\z) = \P(x,u)$ to 
\beq\label{Pdyk}
\P\Upstar(dy^k) \equiv \Sum ip (D_i\P^k) \, dx^i, \qquad \rg kp.
\eeq
Thus,  the pulled-back one-forms \eq{Pdyk} will form a basis for the space of horizontal one-forms on $\Ji(\M ,p)$ provided the $p \times p$ minor consisting of the first $p$ columns of the full $p \times n$ total derivative matrix $D\P$ given in \eq{DP} 
is non-singular:
\beq\label{D0P}
\det D_0\P \ne 0, \qbox{where} D_0\P = \Pa{D_i\P^k}, \quad \rg{i,k}p.
\eeq
Observe further that our $\P$-regularity condition \eq{DP} implies that some $p \times p$ minor of $D\P$ is non-singular, and hence, locally, one can always choose a suitable set of local coordinates on $\N $ such that the non-singularity condition \eq{D0P} holds. 

It is well known that the algebra of differential invariants of a Lie transformation group, \cite{FOmcII,E}, or (modulo technical hypotheses) a Lie pseudo-group, \cite{KruLycLT,OP3}, is generated from a finite number of low order \emph{generating differential invariants} through successive application of the operators of invariant differentiation.  The construction of the generating differential invariants, the invariant differential operators, and the identities (syzygies and recurrence relations among them) can be completely systematized through the symbolic calculus provided by the equivariant method of moving frames, \cite{FOmcII, KOivb, Mansfield,OP2}.  In particular, the moving frame invariantization process allows one to construct a \emph{contact-invariant horizontal coframe}, that is, a linearly independent set of $p$ horizontal contact-invariant one-forms
\beq\label{ih1df}
\omega ^i = \Sum jp Q^i_j(\wk) \, dy^j , \qquad \rg ip,
\eeq
on $\Jk(\N ,p)$, where $0 \leq k < \infty$ is the order of the equivariant moving frame map.  The term ``contact-invariant'' means that each one-form is invariant under prolonged group transformations modulo contact forms, i.e., for each $[g] \in [G]$, each $\omega ^i$ agrees with the horizontal component of its pull-back:
$$[g]\ps k\,\Upstar \,\omega ^i \equiv \omega ^i, \qquad \rg ip.$$
   Each $\omega ^i$ is, in fact, the horizontal component of a fully $[G]$-invariant one-form, whose additional contact component, which will not be used here, can also be explicitly constructed via the method of moving frames, \cite{FOmcII,KOivb}.  For instance, in the case of curves, so $p=1$, under the action of the Euclidean group, the contact-invariant one-form is the standard arc length element $\omega = ds$, which can be identified as the horizontal component of a fully invariant one-form.  

Given the  horizontal coframe \eq{ih1df}, the corresponding dual \emph{invariant differential operators} $\subs{\CD}p$ are defined so that
\beq\label{dH}
d_H F = \Sum jp (\CD_j F)\, \omega ^j
\eeq
for any differential function $F \colon \Ji(\N ,p) \to \R$. In particular, if $I$ is a differential invariant, so are its derivatives $\CD_j I$ for $\rg ip$, and hence, by iteration, all higher order derivatives $\CD_J I = \CD_{j_1} \cdots \CD_{j_k} I$, $k = \#J \geq 0$, are differential invariants as well. For example, in the case of the Euclidean group acting on curves, the dual to the contact-invariant arc length one-form $\omega = ds$ is the total derivative with respect to arc length, denoted $\CD = D_s$.  Applying $\CD$ to the basic curvature differential invariant $\kappa $ produces a complete system of differential invariants $\kappa, \ \kappa _ s = \CD \kappa, \ \kappa _{ss} = \CD^2 \kappa, \ \ldots\>$, meaning that any other differential invariant can be written (locally) as a function thereof.

Using the surjective map $\Pk$ to pull-back the horizontal one-forms \eq{ih1df} produces, by a straightforward generalization of Theorem \ref{th-dinv}, a system 
\beq\label{lih1df}
\lomega ^i = \piH\bigl[\,\Pks \,\omega^i\,\bigr] = \Sum kp P^i_k(\zk) \, dx^k  , \qquad \rg ip,
\eeq
of $G$-contact-invariant  horizontal one-forms on $\Jk(\M ,p)$, whose coefficients $P^i_k(\zk)$ can be readily constructed from the local coordinate formulas for $\P$, the horizontal one-forms $\omega ^i$, along with \eqf{Pdyk}.  Under the non-singularity condition \eq{D0P}, the resulting one-forms are linearly independent, and hence determine dual invariant total differential operators $\subs {\CE}p$ on $\Ji(\M ,p)$, satisfying
\beq\label{dHM}
d_H \Fhat = \Sum jp (\CE_j \Fhat)\, \lomega ^j
\eeq
for any differential function $\Fhat \colon \Ji(\M ,p) \to \R$.  

Summarizing the preceding discussion: 

\begin{theorem} \label{th-DI} 
Let $\P \colon \M \to \N $ be a surjective map.  Suppose that the action of $G$ on $\M $ is $\P $-projectable. Let $\sups \omega p$  be a $[G]$-contact-invariant horizontal coframe on $\Ji(\N ,p)$, and let $\subs \CD p$ be the dual invariant differential operators. 
For $\rg ip$, let $\lomega^i$ be the horizontal component of the pulled-back one-form $\P\Upstar(\omega ^i)$.  Then, provided the non-singularity condition \eq{D0P} holds, $\sups \lomega p$ form a  $G$-contact-invariant horizontal coframe on an open subset of $\Ji(\M ,p)$. Let $\subs {\CE}p$ be the dual invariant differential operators, satisfying \eq{dHM}.  If $F\colon \Jk(\N ,p) \to \R$ is any differential function on $\N $ and $\Fh = F\comp \Pk \colon \Jk(\M ,p) \to \R$ the corresponding differential function on $\M $, then
\beq\label{CDF}
\CE_i \Fh=\CE_i(F\comp \Pk )=(\CD_i F)\comp \P ^{(k+1)} = \widehat{\CD_i F}.
\eeq
\end{theorem}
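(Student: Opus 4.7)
The plan is to establish in sequence that (i) the forms $\sups{\lomega}p$ are linearly independent horizontal forms, (ii) they are $G$-contact-invariant, and (iii) the dual operators $\subs{\CE}p$ satisfy the intertwining identity \eqref{CDF}. The key conceptual input is that the prolonged projection $\Pk$, being induced from the smooth map $\P$ and sending $k$-jets of $p$-dimensional submanifolds to $k$-jets of $p$-dimensional submanifolds, preserves the contact ideal: contact forms on $\Jk(\N,p)$ pull back under $\Pks$ to contact forms on $\Jtk(\M,p)$. Consequently the horizontal projection $\piH$ commutes with $\Pks$ modulo contact forms, a fact I will use throughout.

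For (i), I would write $\omega^i = \Sum jp Q^i_j(\wk)\,dy^j$ as in \eqref{ih1df} and expand $\Pks\omega^i$ by means of \eqref{Pdyk}. The coefficient matrix $P$ appearing in \eqref{lih1df} then factors as $P = (Q\comp \Pk)\cdot D_0\P$; both factors are $p\times p$ and non-singular (the first because $\sups \omega p$ is a coframe on $\Ji(\N,p)$, the second by hypothesis \eqref{D0P}). Hence $\sups{\lomega}p$ are linearly independent, and we may define $\subs {\CE}p$ via the duality \eqref{dHM}. For (ii), the intertwining relation \eqref{gnzn}, equivalent to $\Pk\comp \gk = [g]\ps k\comp \Pk$, yields $(\gk)\Upstar\comp\Pks = \Pks\comp([g]\ps k)\Upstar$. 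Applying this to $\omega^i$, which satisfies $([g]\ps k)\Upstar\omega^i\equiv \omega^i$, gives $(\gk)\Upstar\Pks\omega^i\equiv \Pks\omega^i$ modulo contact forms; projecting horizontally yields $(\gk)\Upstar\lomega^i\equiv \lomega^i$, as required.

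For (iii), I would pull back the defining relation $\dH F = \Sum jp (\CD_j F)\,\omega^j$ on $\N$ by $\Pko$ and take horizontal components on both sides. On the right this produces $\Sum jp \widehat{\CD_j F}\cdot\lomega^j$. For the left, the identity $\Pkos\,dF = d(F\comp \Pko) = d\Fhat$ (with the standard bookkeeping through the projection $\pi^{k+1}_k$), combined with the fact that $\dV F$ is a contact form whose pullback is therefore also contact, yields
$$\piH(\Pkos\,\dH F) = \piH(d\Fhat) = \dH\Fhat = \Sum ip (\CE_i\Fhat)\,\lomega^i.$$
Equating these two expressions and invoking the linear independence of $\sups{\lomega}p$ established in (i) delivers \eqref{CDF}.

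The main subtlety, and principal potential obstacle, lies in the bookkeeping between jet orders (noting that $F$ is of order $k$ while $\dH F$ is of order $k+1$, which is why $\Pko$ rather than $\Pk$ appears) together with the verification that $\Pks$ preserves the contact ideal. The latter is standard but crucial: in local coordinates it amounts to checking by the chain rule that the pullback of each generating contact one-form on $\N$ decomposes as a linear combination of generating contact one-forms on $\M$ with differential-function coefficients, which follows from the fact that $\Pk$ sends prolongations of submanifolds to prolongations of submanifolds. Once these structural points are secured, the remainder of the argument is purely formal manipulation of the variational bicomplex.
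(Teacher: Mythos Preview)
Your proposal is correct and follows essentially the same approach as the paper. The paper treats parts (i) and (ii) in the discussion preceding the theorem statement (which is explicitly labeled ``Summarizing the preceding discussion''), and then proves \eqref{CDF} via the identity $\piH\bigl[\Pks(d_H\Omega)\bigr]=d_H\bigl[\Pks\,\Omega\bigr]$, which is exactly your argument that $\Pks$ preserves the contact ideal so that pulling back $d_H F=\sum(\CD_jF)\,\omega^j$ and comparing coefficients against the $\lomega^j$ yields the result; your treatment is simply more explicit about the order bookkeeping and the factorization $P=(Q\comp\Pk)\cdot D_0\P$.
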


The proof of the final formula \eq{CDF} follows from the fact that, since $\Pks$ maps contact forms to contact forms,
$$\piH \bigl[\,\Pks(d_H \Omega )\,\bigr] = d_H\bigl[\,\Pks\,\Omega\,\bigr ]$$
for any differential form $\Omega $ on $\Jk(\N ,p)$.  Taking $\Omega = F$ reproduces \eq{CDF}.
In particular, if $I\colon \Jk(\N ,p) \to \R$ is a differential invariant on $\N $ and 
$$\Ih = I\comp \Pk \colon \Jk(\M ,p) \longrightarrow  \R$$
is the induced differential invariant on $\M $, then their invariant derivatives are directly related:
\beq\label{CDI}
\CE_i \Ih =\CE_i(I\comp \Pk )=(\CD_i I)\comp \P ^{(k+1)}= \widehat{\CD_i I}.
\eeq
Thus, the prolongations of the surjective map $\P$ provide an explicit isomorphism between the algebra of differential invariants on $\N $ and the subalgebra of fiber-wise constant differential invariants on $\M $.

%%%%%%%
\subsection{Gauge invariants}
\label{gauge}

%%%%

In this section, we investigate the structure of the aforementioned subalgebra of fiber-wise constant differential invariants on $\M $ in further detail.
Although we are not necessarily dealing with fiber bundles, we will adapt standard terminology to this situation.  Define the \emph{gauge group} of the surjective map $\P$  to be the pseudo-group  
 \beq\label{eq-GP}
 \GP=\set{\varphi\in\Diff_{\rm loc}(\M)}{\varphi(\F_\w\cap \dom \varphi ) \subset \F_\w \ \xbox{for all} \ \w\in \N} ,
 \eeq
consisting of all local diffeomorphisms of $\M$ that fix the fibers of $\P$.  (By a \emph{local diffeomorphism}, we mean a smooth, locally defined, one-to-one map with smooth inverse.)
Clearly $\GP$ acts transitively on each fiber.  Indeed, since $\P$ is a submersion,  around each point $z_0 \in M$ there exist local coordinates $\z =  (\w,\y) = (\sups \w n,\sups \y{m-n})$ such that $\P(\z) = \w = (\sups \w n)$ provide the induced local coordinates on $\N$. We will call such coordinates \emph{$\P$-canonical}.

In $\P$-canonical coordinates,  the elements of $\GP$ take the form $(\w,\y) \longmapsto (\w, \psi (\w,\y))$, where, for each fixed $\w$, the map $\psi _\w(\y) = \psi (\w,\y)$ is a local diffeomorphism of $\R^{m-n}$.  Given $1 \leq p<n$, any $\P$-regular $p$-dimensional submanifold $S\subset \M$   can be parametrized by a subset, $\sups xp$,  of the  $\w$-coordinates; we write the remainder of $\w$-coordinates as $\v = \psups \v{n-p}$, so that, by suitably relabeling, $\w = (x,\v)$. Then $x$'s will play the role of independent variables, while $\v$'s and $\y$'s play the role of dependent variables on $\M$. 
At the same time,  $x$'s  and $\v$'s will play the roles of independent and dependent variables, respectively, on $\N$.    

The fibers of $\Pk\colon \Jtk (\M ,p)\to  \Jk (\N ,p)$ are parametrized by the induced jet coordinates $\y^\alpha _J$, where $\alpha =1,\dots,m-n$, and $J$ is a symmetric multi-index of order $\leq k$. 
Clearly, the prolonged action of $\GP\ps k$ on the jet space $\Jk(\M,p)$ is also transitive on the fibers of $\Pk$.

We can thus identify the fiber-wise constant (differential) invariants on $\M$ with the (differential) invariants of the semi-direct product pseudo-group $G \ltimes \GP$.  We will call these  \emph{gauge invariants} and \emph{gauge differential invariants} for short.    
 %%%
 
 %%%
 \begin{proposition} The algebra of gauge differential invariants coincides with the algebra of differential invariants for the action of $G \ltimes \GP$.
 \end{proposition}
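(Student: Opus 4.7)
The plan is to prove a two-way inclusion, using as our main engine the transitivity of the prolonged gauge pseudo-group on the fibers of $\Pk$, which has already been asserted in the paragraph preceding the Proposition.

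Before starting, I would check that the semi-direct product $G\ltimes \GP$ is well-defined. Because the action of $G$ is $\P$-projectable, each $g\in G$ permutes the fibers of $\P$, so conjugation $\varphi \longmapsto g\,\varphi\,g^{-1}$ carries $\GP$ into itself: if $\varphi(\F_\w\cap\dom\varphi)\subset \F_\w$ for every $\w$, then $(g\,\varphi\,g^{-1})(\F_{\barr\w})\subset \F_{\barr\w}$ for $\barr\w=g\cdot\w$. This conjugation action turns the set-theoretic composition of local diffeomorphisms into a genuine semi-direct product pseudo-group in the sense of \cite{OP3}, and the prolonged action on $\Jk(\M,p)$ is similarly a semi-direct product of $\Gk$ and $\GP\ps k$.

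For the inclusion $(G\ltimes \GP)\text{-invariants}\subseteq \text{gauge invariants}$: any $\Ih$ invariant under $G\ltimes \GP$ is, in particular, invariant under the subgroup $G\subset G\ltimes \GP$, and also under the subgroup $\GP\subset G\ltimes \GP$. Since $\GP\ps k$ is transitive on the fibers of $\Pk$, the $\GP\ps k$-invariance of $\Ih$ forces it to be constant along those fibers, which is the definition of a gauge differential invariant. For the reverse inclusion, suppose $\Ih$ is a gauge differential invariant; then $\Ih$ is $G$-invariant by definition, and constancy along the fibers of $\Pk$ combined with the transitivity of $\GP\ps k$ on those fibers shows that $\Ih(\varphi\ps k \cdot\zk)=\Ih(\zk)$ for any $\varphi\in\GP$ and any $\zk\in\dom \Ih$. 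Invariance under each factor of the semi-direct product is equivalent to invariance under the whole semi-direct product, so $\Ih$ is a differential invariant of $G\ltimes \GP$.

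The main substantive point — everything else is essentially bookkeeping — is the transitivity of $\GP\ps k$ on each fiber of $\Pk$, which the paper states just before the Proposition. I would justify it by working in $\P$-canonical coordinates $\z=(\w,\y)$ with $\w=(x,\v)$ split into independent and dependent variables. Two $k$-jets $\zk,\zt\ps k\in\Jtk(\M,p)$ lie over the same element of $\Jk(\N,p)$ if and only if all their $x$-derivatives of the $\v$-coordinates agree through order $k$; they are free to differ only in the jet coordinates $y^\alpha_J$ with $\#J\leq k$. A gauge transformation $(\w,\y)\longmapsto(\w,\psi(\w,\y))$ prolongs by the chain rule to a map whose action on the $y^\alpha_J$ can, by choosing the Taylor coefficients of $\psi$ in $x$ at the basepoint appropriately, realize an arbitrary prescribed change of those jet coordinates. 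I do not expect genuine obstacles beyond this verification; the rest is a direct consequence of the inclusions of pseudo-groups and the elementary correspondence between invariance and constancy on orbits.
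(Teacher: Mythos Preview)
Your argument is correct and matches the paper's reasoning. The paper does not actually supply a separate proof of this proposition: it is stated as a formalization of the sentence immediately preceding it (``We can thus identify the fiber-wise constant (differential) invariants on $\M$ with the (differential) invariants of the semi-direct product pseudo-group $G \ltimes \GP$''), which in turn rests on the asserted transitivity of $\GP\ps k$ on the fibers of $\Pk$. You have simply made this implicit argument explicit, including the verification that $G$ normalizes $\GP$ and a sketch of why the prolonged gauge pseudo-group acts transitively on the fiber jet coordinates $\y^\alpha_J$ in $\P$-canonical coordinates.
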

 %%%%

In $\P$-canonical coordinates, a projectable action of $G$ on $M$ takes the form
$$(\w,\y)\longmapstox (\phi(\w), \chi(\w,\y)).$$
The projected action of  $[G]=G/G_N$ on $N$ is then given by $\w=(x,\v)\mapsto \phi(\w)$.
 We observe that  the prolongation of the $\GP $-action  leaves the jet coordinates $(x,\v^\beta _K)$ invariant and, moreover, its differential invariants are independent of the $\y^\alpha _J$ coordinates.  Thus, in the canonical coordinates, the isomorphism between  the fiber-wise constant differential invariants under the prolonged action of $G$ on $M$ and the differential invariants under the prolonged action of $[G]$ on $N$ becomes transparent. 
 
\Rmk While the general expressions simplify when written in canonical coordinates, in  examples, this may not be practical because the explicit formulas for the group action, differential invariants, etc.~may be 
unavailable or just too complicated to work with. Furthermore, canonical coordinates may have a restricted domain of definition, and hence less suitable for visualization and analysis of geometric objects.

 %%%%
 
 %%%%
 \begin{example} \label{ex-cp} 
 \rm

Let $\M = \set{(x,y,z) \in \R^3}{z\neq0}$. Consider the surjective map
 \beq\label{eq-P0Z}
 (X,Y)=\Po(x,y,z)=\left(\frac {x}{ z}, \frac {y}{ z}\right), \qquad  (x,y,z) \in \M,
 \eeq
 onto $\N = \R^2$. Note that we can identify the map $\Po$ with central projection, centered at the origin, from $\M $
 to the plane $\N \simeq \R^2$ defined by $z=1$.
 The fibers of $\Po$ are the rays in $M$ emanating from the origin. 

Observe that 
\beq\label{eq-c-can}\qeq{X = x/z,\\Y = y/z, \\Z=z,}\eeq 
form canonical coordinates for $\Po$ on $\M$, in which $\GP$ consists of all local diffeomorphisms of the form  $(X,Y,Z) \longmapsto (X, Y, \varphi (X,Y,Z))$ 
or, equivalently, in the  original coordinates, $$(x,y,z) \longmapstox (\psi(x,y,z)\, x,\, \psi(x,y,z)\,y,\,\psi(x,y,z)\,z),$$ where $\psi(x,y,z) = \varphi(x/z,y/z,z){/z }$.

The local\footnote{The action is local because of the  restriction  $z\ne 0$.} 
centro-affine action of $\GL3$ on $\M$ is $\Po$-projectable.
In $\Po$-canonical coordinates, it takes the form 
\beq \label{G-canonical}\hskip0pt
(X,Y,Z)  \longmapsto  \Pa{\frac{a_{11}\,X+a_{12}\,Y+a_{13}}{a_{31}\,X+a_{32}\,Y+a_{33}},\ \frac{a_{21}\,X+a_{22}\,Y+a_{23}}{a_{31}\,X+a_{32}\,Y+a_{33}}, \  (a_{31}\,X+a_{32}\,Y+a_{33})\,Z}\!,\hskip-15pt
\eeq
where $A = (a_{ij}) \in \GL3$. The global isotropy group 
$$G_N = \set{ \lambda \, \mathrm I}{ 0 \neq \lambda \in \R}$$ 
consists of the uniform scalings, i.e.~nonzero multiples of the identity matrix, and hence the quotient group is the \emph{projective linear group} $[G] = G/G_N = \PGL3$. The induced action of $[G] = \PGL3$ on $\N$ coincides with the usual linear fractional action
\beq \label{eq-proj-l}(X,Y)  \longmapstox \Pa{\frac{a_{11}\,X+a_{12}\,Y+a_{13}}{a_{31}\,X+a_{32}\,Y+a_{33}},\ \frac{a_{21}\,X+a_{22}\,Y+a_{23}}{a_{31}\,X+a_{32}\,Y+a_{33}}}
\eeq  
on the projective plane. We regard $X$ as the independent variable, and $Y,Z$ as dependent variables on $M$, with $Y$ also serving as the dependent variable on $N$. 

The algebra of fiber-wise constant $G$-differential invariants on  $\Jk(\M,1)$ coincides  with the algebra of $G\ltimes \GP$-differential invariants.
Since $\GP$ leaves $X, Y$ as well as the jet variables $Y_X, Y_{XX},\ldots$ invariant, and does not admit any invariants depending on $Z,Z_X, Z_{XX}, \ldots$,  the algebra of  $G\ltimes \GP$-differential invariants on $\M$ is isomorphic to the algebra of differential invariants for the standard projective action of $\PGL3$ on $\N$.  See Example \ref{ex-cp2} below for explicit formulas.

 \end{example}

 %%%%%%
\subsection{Cross-sections and invariantization}
\label{sect-Pi-iota}
%%%

%%%%
The construction of an equivariant moving frame  relies on the choice of  a cross-section to the (prolonged) group orbits, \cite{FOmcII,OP2}. In this section,  we investigate what happens when we choose cross-sections on $\M$ and $\N$ that are compatible under the surjective map $\P$.

As before, let $G$ be a Lie group acting on the manifold $\M$.  Let $\orbit_z$ denote the orbit through the point $z \in \M$.

 \begin{definition} \rm A submanifold $\K  \subset \M$ is called  \emph{a local cross-section} to the group action if there exists an open subset $\U\subset M$, called the \emph{domain of the cross-section}, such that, for each $\z \in \U$,  the connected component $\orbit^0_z$ of  $\orbit_z\cap\,\U $ that contains $z$  intersects $\K$ transversally  at a single point,   
 so $\orbit^0_z \cap \K = \bc{z_0}$ and  $T\K|_{z_0}\oplus T\orbit_z|_{z_0}=T\M|_{z_0}$.\end{definition}

Let $s$ denote the maximal orbit dimension of the $G$-action on $M$. If a point $z$ belongs to an orbit of dimension $s$, then the Frobenius Theorem, \cite{E}, implies the existence of a local cross-section $\K$, of codimension $s$, whose domain includes $z$. While the definition of a cross-section allows $s < r = \dim G$, the construction of a locally equivariant moving frame map $\rho \colon \U \to G$ requires that the group act locally freely, which is equivalent to the requirement that $s = r$.
 
Let $\cFU$ denote the algebra of all smooth real-valued functions $F \colon \U \to \R$, and $\cFUG$ the subalgebra of all locally  $G$-invariant functions.  Note that each locally invariant function $I \in \cFUG$ is uniquely determined by its values on the cross-section, namely $I \mid \K$, since, by invariance, $I$ is constant along each orbit.
Thus, the cross-section $\K$ serves to define an \emph{invariantization map} $\inv\colon \cFU \to \cFUG$, which maps a function $F$  on $\U$ to the unique locally invariant function $\inv (F)$ that has the same values on the cross-section: 
$$\inv (F) \mid \K=F\mid \K.$$
This immediately implies that the invariantization map  preserves all algebraic operations.  Moreover, if $I$ is an invariant, then $\inv(I) = I$, which implies that $\inv \comp \inv = \inv$.  In other words, $\inv \colon \cFU \to \cFUG$  is an algebra morphism that canonically projects functions to invariants.

In local coordinates $z = \psups zm$, invariantization maps the coordinate function $z^i$ to the fundamental invariant $I^i = \iota(z^i)$.   The $r = \dim G$ functions $\subs Fr$ that serve to define the cross-section, $\K = \{F_j(z) = c_j,\ \rg j r\}$, have constant invariantizations, $\inv(F_j) = c_j$, and are known as the \emph{phantom invariants}.  This leaves $m-r$ functionally independent invariants, which can be selected from among the fundamental invariants $I^i$. In particular if one uses a coordinate cross-section, say $\K = \{z^j = c_j,\ \rg j r\}$, then the first $r$ fundamental invariants are the constant phantom invariants: $I^1 = \inv(z^1) = c_1, \ \ldots\ , I^r = \inv(z^r) = c_r$, and the remainder form a complete system of functionally independent invariants $I^{r+1} = \inv(z^{r+1}),\ \ldots \ , I^m = \inv(z^m)$, meaning that any other invariant can be expressed in terms of them.  Indeed, invariantization of a function  is done by simply replacing each variable $z^i$ by the corresponding fundamental invariant:
 \beq\label{invz} \inv\bigl[F\psups zm\bigr] = F\psups Im.\eeq 
In particular, if $J = \inv(J)$ is any invariant, then we can immediately rewrite it in terms of the fundamental invariants by simply replacing each variable by its invariantization:
 \beq\label{Replacement} J\psups zm = J\psups Im.\eeq 
This simple, but remarkably powerful result is known as the Replacement Theorem, \cite{FOmcII}. Assuming local freeness, the invariantization process can also be applied to differential forms, producing the corresponding invariant differential forms, their dual invariant differential operators, and, more generally, vector fields, all of whose explicit formulae can be obtained via the equivariant moving frame map $\rho \colon \U \to G$.

Given a surjective map $\P \colon \M \to \N$,  a $\P$-projectable action of $G$ on $\M$,  and the corresponding action of the quotient group $[G]$ on $\N$, we can thus introduce cross-sections for both actions, along with their associated moving frames and invariantization maps.  Assuming that the cross-sections are compatible, meaning that $\P$ maps one to the other, we deduce that the resulting invariantization maps are respected by the projection.

 %%%%
 \begin{proposition} Let $ \Kh$ be a local cross-section for the $\P$-projectable action of $G$ on $\widehat \U\subset  \M$, and $\K$ a local cross-section for the projected $[G]$-action on $\P(\widehat \U)=\U \subset \N$ satisfying the compatibility condition $\P(\widehat \K)=\K$. Let $\invh \, \colon \cFUh \to \cFUhG$ and $\inv\colon \cFU \to \cFU^{[G]}$ be the corresponding invariantization maps on smooth functions.  Then  
 \beq\label{lift_iota} \P\Upstar \inv(F)=\invh \>\P\Upstar (F) \qqbox{for all} F\in \cF(\U).\eeq 
\end{proposition}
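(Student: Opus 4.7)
The proof plan is to use the defining characterization of invariantization: $\widehat\iota(\widehat F)$ is the unique $G$-invariant function on $\widehat{\mathcal U}$ whose restriction to $\widehat{\mathcal K}$ coincides with $\widehat F\mid\widehat{\mathcal K}$, and analogously for $\iota$ on $\mathcal U$. So I would verify that $\Pi^*\iota(F)$ satisfies the two conditions that uniquely characterize $\widehat\iota(\Pi^*F)$, namely (a) $G$-invariance on $\widehat{\mathcal U}$ and (b) agreement with $\Pi^*F$ on $\widehat{\mathcal K}$.

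For (a), since $\iota(F)\in\mathcal F(\mathcal U)^{[G]}$ is a $[G]$-invariant function on $\mathcal U$, Theorem \ref{th-inv} applied to the projectable $G$-action on $\widehat{\mathcal U}$ yields that the pull-back $\Pi^*\iota(F)=\iota(F)\comp\Pi$ is a $G$-invariant function on $\widehat{\mathcal U}$.  For (b), the defining property of $\iota$ gives $\iota(F)\mid\mathcal K=F\mid\mathcal K$; composing with $\Pi$ and using the compatibility hypothesis $\Pi(\widehat{\mathcal K})=\mathcal K$ then gives
\[
\Pi^*\iota(F)\mid\widehat{\mathcal K}=\bigl(\iota(F)\comp\Pi\bigr)\mid\widehat{\mathcal K}=(F\comp\Pi)\mid\widehat{\mathcal K}=\Pi^*(F)\mid\widehat{\mathcal K}.
\]
By the uniqueness of $\widehat\iota$ applied to the function $\Pi^*F\in\mathcal F(\widehat{\mathcal U})$, we conclude that $\Pi^*\iota(F)=\widehat\iota(\Pi^*F)$, which is \eq{lift_iota}.

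I do not anticipate a substantial obstacle; this is essentially a diagram-chase that reduces to the uniqueness characterization of invariantization.  The only points requiring a little care are domain issues (making sure $\Pi^*F$ is defined on all of $\widehat{\mathcal U}$, which is immediate from $\Pi(\widehat{\mathcal U})=\mathcal U$) and recognizing that the uniqueness of an invariant extension from the cross-section requires the cross-section property, i.e.\ that each $G$-orbit component meets $\widehat{\mathcal K}$ in a single point.  Both hold by assumption, so the argument is short.
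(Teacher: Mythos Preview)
Your argument is correct. The paper actually states this proposition without proof, evidently regarding it as an immediate consequence of the definition of invariantization; your verification via the uniqueness characterization --- checking $G$-invariance of $\Pi^*\iota(F)$ through Theorem~\ref{th-inv} and agreement with $\Pi^*F$ on $\widehat{\mathcal K}$ through the compatibility $\Pi(\widehat{\mathcal K})=\mathcal K$ --- is precisely the natural way to fill in the details, and there is nothing to compare it against.
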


 %%%%%%%%%
If, furthermore, the actions of $G$ on $\Jk(\M,p)$ and $[G]$ on $\Jk(\N,p)$ are both locally free, then  the invariantization operation can be extended to differential forms in an analogous manner, as described in detail in \rf{KOivb}, and \eqf{lift_iota} readily generalizes from functions $F$ to differential forms $\Omega $.  

%%%

The construction of $\P$-related cross-sections is especially transparent in $\P$-canonical coordinates. 
As above, let $(\sups xp, \sups \v{n-p},\sups \y{m-n})=(x,\v,\y)$ be local coordinate  functions on $\M$, such that $\Pi(x,\v,\y)=(x,\v)$, with $x$ serving as independent variables on both $\M$ and $\N$, while $(\v,\y)$  and $\v$ serve as dependent variables on  $\M$ and $\N$, respectively.
Let $\K$ be a cross-section for the $[G]$-action on $\Jk (\N,p)$ and $\Kt=(\Pk)^{-1}(\K) \subset \Jk (\M,p)$. The cross-section  $\Kt$ can be prescribed by $m-\dim[G]$ independent algebraic equations involving only the variables $x, \v, \v_J^\alpha$.    
 There is a well-defined action of the global isotropy subgroup $G_N$ on $\Kt$.  
 Let $\Kh\subset\Kt$ be a cross-section for this reduced action. Since 
$G_N$ leaves the jet variables $x, \v, \v_J^\alpha$ fixed, the defining equations of $\Kh$ do not introduce any new relations among these variables, and thus $\Pk(\Kh) = \K$. By construction, $\Kh$ is a $G$-cross-section.

Assume now that there is a subgroup $\Gt \subset G$ that is isomorphic with the quotient group $[G]$.
In this case, $G$ factors as a product $G=G_N\cdot \Gt $,  and we can use  inductive construction developed in \cite{kogan-03} to determine the moving frame and the invariants.
(More generally, one can apply the general recursive algorithm in \cite{Orec} directly to the subgroup $G_N$ without requiring the existence of a suitable subgroup $\Gt$.) These constructions allow one to determine the formulae relating the invariants and invariant differential forms of the full group $G$ to those of the subgroups $G_N$ and, when it exists, $\Gt$. It turns  out that the preceding construction of  $\P$-related cross-sections interacts nicely with  the  inductive and recursive approaches, as described below.

 We note that the action of $\Gt $ on $\M$ projects to the $[G]$-action on $\N$ and $\Gt _N=\{e\}$. Let $\K\subset \Jk (N, p)$  be a cross-section for prolonged action of $[G]\cong\Gt $  and $\inv$ denote the corresponding invariantization map.   We observe that  $\Kt=(\Pk)^{-1}(\K) $ is a local cross-section  for the $\Gt $-action on $\Jk (M,p)$, and denote the corresponding invariantization map by $\invt$. Since the coordinates $(x,\v,\v^\alpha_J)$ are transformed by $\Gt $ in an identical manner, whether they are considered to be functions on $\Jk(N,p)$ or on $\Jk(M,p)$, we have  $\invt(x,\v,\v^\alpha_J)=\inv (x,\v,\v^\alpha_J)$. (By equality here, we mean that these functions have the same formulae, although they are defined on different spaces.)
Together with $ \invt(\y^\beta_J)$ they comprise a fundamental set of $\Gt$-invariants on $M$.

Assuming that  the order of prolongation  $k$ is at least the order of freeness of the $\Gt $-action on $\Jk (N, p)$, we  can invariantize the horizontal differential forms, $\invt(dx^i)=\inv(dx^i)$, where equality is again understood in the symbolic sense. We denote the horizontal parts of those forms by $\sups \omega p$ and the corresponding dual horizontal invariant differential operators by $\subs \CD p$. Since all of these objects are expressed in  terms of  $x,\v,\v^\alpha_J$ and $dx$ by the same formulae, whether they are defined  on $\Jk(\N,p)$ or $\Jk(\M,p)$, we will use the same symbols to denote them.

The action of $G_N$ restricts to the cross-section $\Kt$.  Let $\Kh\subset \Kt$ be a cross-section for this restricted action, and let  $\invh$ be the corresponding invariantization map.  Using the inductive method, we  can express the normalized invariants of $G$ in terms of the normalized invariants of $\Gt $ as follows:
\beq
\qxeq{
\invh (x^i) = \invt (x^i),\\
\invh (\v^\alpha_J)=\invt (\v^\alpha_J),\\
\invh (\y^\beta_J)= \invt \bbk{F^\beta_J\left (x,\,\v^\alpha_K,\,  \y^\gamma_K\right)},}
\eeq
where, $ \alpha$ runs from $1$ to $n-p$, while $\beta, \gamma$  run from $1$ to $m-n$, and  $J, K$ range over all multi-indices 
with $0 \leq \abs K \leq \abs J$. In the final formula, the $F^\beta_J$ are algorithmically computable functions.  We also note that invariantization $\invh$ preserves the $\widetilde G$-invariant basis of differential forms and differential operators:  $\invh(\omega^i)=\omega^i$ and $\invh(\CD_i)=\CD_i$.
%%%%

%%%%%

\begin{example}  \label{ex-cp2}
 \rm Let us return to Example~\ref{ex-cp}, where we introduced canonical coordinates $(X,Y,Z)$ for the central projection, whose expressions in terms of the Cartesian coordinates are given by \eq{eq-c-can}.
   In this example, $G=\GL3$, $\Gt  =\SL3$,  $G_N=\R^*$, the latter denoting the one-dimensional Lie group of non-negative real numbers under multiplication, so that $[G]=G/G_N=\PGL3$.

 The standard cross-section  for  the projective action   \eq{eq-proj-l} of $[G]$  is  
 \beq\label{eq-cs-psl3-N}
 \K=\bigl\{ \xeq{X=Y=Y_1=0,\\ Y_2=1,\\Y_3=Y_4=0,\\Y_5=1,\\Y_6=0}\bigr\}\subset  \N,\eeq
 where $Y_i$ denotes the jet coordinate corresponding to $D_X^i(Y)$.
The lowest order normalized differential invariant is the standard projective planar curvature, $\inv(Y_7)=\,\prcurv $, whose explicit formula in jet coordinates  can be found in entry 2.3 of Table 5 in \cite E. 
The inductive method  \cite{kogan-03} enables one to express the projective curvature compactly in terms of the equi-affine curvature  as follows:
 \beq\label{prcurv} \prcurv=\frac{6\,\eacurv_{\eaal\eaal\eaal}\,\eacurv_{\eaal}-7\,\eacurv_{\eaal\eaal}^2-3\,\eacurv\,\eacurv_{\eaal}^2}{\,6 \,\eacurv_{\eaal}^{8/3}},\eeq
where the equi-affine curvature\footnote{In Blaschke \cite{Blaschke}, as well as in some other sources, the equi-affine curvature is defined to be $1/3$ of the expression $\eacurv$ in \eq{eacal}.  Our choice, however, leads to simpler numerical factors in the subsequent expressions.} and arc length are
 \Eq{eacal}
$$\qxeq{\eacurv = \frac \B{3\:Y_2^{8/3}}\,,\\d \eaal = Y_2^{1/3} \, dX,\quad \qbox{with}\quad  \B =  3\:Y_2Y_4-5\: Y_3^2.}$$
As usual, equi-affine invariants are  not defined at the inflection points $Y_2=0$.  Note also that $Y_2\equiv 0$ implies that the planar curve is (a part of) a straight line. The derivative of equi-affine curvature with respect to equi-affine arc length \eq{eacal} is given by
\beq\label{eq-ds-eacurv}\eacurv_\eaal = \frac A{9\: Y_2^4}\,,\eeq
where the differential function 
\Eq A
$$\A = 9\,Y_5\, Y_2^2-45\,Y_4\,Y_3\,Y_2+40\,Y_3^3$$
plays an important role in what follows.
   In particular, if $Y(X)$  satisfies $A \equiv 0$, then the equi-affine curvature of the curve is constant, and hence the curve must be contained in the orbit of a one-parameter  subgroup of the equi-affine group, which means that it is (part of) a conic section, \rf E. 
Otherwise, the projective arc length element and dual invariant differential operator are given by
\Eq{pral}
$$\qqeq{d\pral= \left(\, {\eacurv_\eaal} \right)^{1/3} d \eaal = \frac{A^{1/3}}{3^{2/3} \,Y_2}\,dX,\\
\CD_\pral= \frac{3^{2/3} \,Y_2}{A^{1/3}}\,D_X,}$$
Planar projective invariants are defined at the points where $Y_2\neq 0$ and $A\neq 0$, and are generated by the projective curvature invariant $\eta$ through invariant differentiation with respect to the projective arc length \eq{pral}.

We now employ the cross-section   $\Kt=(\Pk)^{-1}(\K) \subset \M$, defined by the same set of equations \eq{eq-cs-psl3-N} as $\K$, to compute differential invariants for the $\Gt=SL(3)$-action on $M$.
As above, the gauge invariants are generated by the invariant $\prcurvh =\invt(Y_7)$ and the invariant differential operator $\CD_{\pbal}=\invt (D_X)$, which, in the canonical coordinates, have the same symbolic expressions  as their planar counterparts $\prcurv$ and  $\CD_\pral$. Nonetheless,  we will be using hats to emphasize that the former are defined  on $M$, and to be consistent  with the notation of \sect{sect-inv-diff-forms}.

The computation of the invariantizations $\invt(Z_i)$, $i \geq 0$, of the fiber coordinates $Z_i=D_X^i\,Z$ requires more effort. 
 We note that the prolongation of   \eq{G-canonical} is given by 
\beq\label{eq-SLz}
\qxeq{Z \longmapsto  \bar Z = (a_{31}\,X+a_{32}\,Y+a_{33})\,Z ,\\ Z_{i+1} \longmapsto H\, D_X(\bar Z_i),\\ i\geq 0,}
\eeq
where 
$$H =\frac{(a_{31}\,X+a_{32}\,Y+a_{33})^2}{(a_{31}a_{12}-a_{11}a_{32})\,XY_1+(a_{12}a_{33}-a_{13}a_{32})\,Y_1+(a_{32}a_{11}-a_{12}a_{31})\,Y+(a_{33}a_{11}-a_{13}a_{31})}.$$ 
The moving frames $\rho\colon\Jk(\N,1)\to[G]$  and  $\widetilde\rho\colon \Jk(\M,1)\to \Gt$ corresponding to the respective cross-sections $\Kt$ and $\K$  have the same symbolic expressions in the canonical coordinates. Since the explicit formulas are rather involved, we will not reproduce them here, but     refer the reader to Example~5.3 in  \cite{kogan-03}, where the projective moving frame is expressed in a concise way using the inductive approach.      The normalized invariants $\ostrut{14}0\itZ = \invt(Z)$ and $\itZ_i = \invt (Z_i)$, $i \geq 1$, are obtained by substituting those expressions into 
\eq{eq-SLz}.
In particular,  
\beq\label{eq-invtz}\itZ = \invt(Z)=\frac{Z}{\eacurv_\eaal^{1/3}},\eeq
where $\eacurv_\eaal$, given by \eq{eq-ds-eacurv}, is now considered to be a function on $\J5(\M,1)$.

We conclude that a complete system of centro-equi-affine invariants for space curves is generated by the seventh order gauge invariant $\prcurvh$, whose symbolic formula is \eq{prcurv} and the fifth order differential invariant $\itZ\ostrut{14}0$ in \eq{eq-invtz}, by successively applying the invariant differential  operator $\CD_\pbal$, whose symbolic formula is given by \eq{pral}. Remarkably, $\prcurvh$ is the projective curvature and $\itZ$ is $z$ times an equi-affine invariant of the image curve. In \sect{sect-P0}, we will express $\prcurvh$ and $\ostrut{14}0\itZ$ in terms of the third and fourth order centro-equi-affine invariants derived in \cite{Ocentro}.

Finally to compute the centro-affine differential invariants, for $G=\GL3$, we consider the action of $G_N \simeq \R$ on $\M$ given by 
$$X\longmapsto   X, \qquad Y\longmapsto  Y, \qquad Z\longmapsto  \lambda \:Z.$$
This has a simple prolongation:
$$ Y_i \longmapsto Y_i, \qquad Z_i \longmapsto \lambda \:Z_i, \qquad i>0.$$ 
The $G_N$-action restricts to $\Kt$, and we define a cross-section $\Kh\subset \Kt$ to the restricted action by appending the  equation $Z=1$ to \eq{eq-cs-psl3-N}. Following the inductive approach, we observe that $\Kh$ is a cross-section for the prolonged action on  $\J6(\M,1)$ and that the normalized $G$-invariants are expressed in terms of the normalized $\Gt =\SL3$ invariants as follows,
\begin{eqnarray}  \label{eq-ivh}
\qeq{\invh(Y_i) =\invt(Y_i), \\i>6,\\ \\
\ihZ_i=\invh(Z_k) = \frac {\invt (Z_k)}{\invt (Z)},\\ k>0,}
\end{eqnarray}
where we omit the constant phantom invariants. 

Thus, the centro-affine differential invariants for space curves are  generated by the same seventh order gauge differential invariant
$\prcurvh$ and the sixth order differential invariant 
\beq\label{eq-invtz1}\ihZ_1=\invh(Z_1)=\frac{\invt (Z_1)}{\invt (Z)}= \frac{Z_1}Z\,\frac 1 {(Y_2\,\mu_\eaal)^{1/3}}\ -\ \frac{\mu_{\eaal\eaal}}{3\,\mu_\eaal^{4/3}}\eeq
through successive application of the invariant differentiation operator 
\beq\label{eq-invD1}\CD_\pbal = \invh(D_X)=\invt(D_X).
\eeq

\end{example}

\Rmk
It may be instructive to revisit the preceding example in the standard jet coordinates: $(x, y, z, y_1, z_1,\dots)$, where $ y_i=D_x^iy$ and $z_i=D_x^iz$.
The corresponding non-coordinate cross-section $\Kt\ostrut{14}0$ is given by 
 \Eq{eq-cs-psl3-1}
$$\ceq{\qxeq{x=0, & y=0,&  y_1 = 0, & y_2=1,&y_3 = -3\,z_1,} \\
\qxeq{ y_4 = 12\,z_1^2-6\,z_2,&
y_5 = -60\,z_1^3+60\,z_1\,z_2-10\,z_3+1,}\\
\qeq{y_6 = 360\,z_1^4-540\,z_1^2\,z_2+120\,z_1\,z_3+90\,z_2^2-24\,z_1-15\,z_4.}}
$$
The cross-section $\Kh$ is fixed by appending the further equation $z=1$ to \eq{eq-cs-psl3-1}.
   We note that 
   \Eq{eq-pral-alt}
$$d\widehat\pral=\invh \left({\Po\pss 5}\,dX\right)\equiv\invh\left(\frac {z-z_1x}{z^2}\,dx\right)=\invh(dx),$$
where, in the middle term, $dX$ is considered to be a form on $N$ and, as usual, $\equiv$ means equality up to a contact form.
The invariant form $d\widehat\pral$ is dual to the invariant differential operator \eqref{eq-invD1}.
Applying the moving frame recurrence formulae and the Replacement Theorem \eqref{Replacement}, we can express the projective curvature $\prcurvh= \invh ( Y_7)$ in terms of normalized invariants $I_i=\invh(y_i)$,  
$J_i=\invh(z_i)$, as follows:
\Eq{eq-mh-alt} 
$$\eeqo{\prcurvh
= I_7+ 27\,\bigl(120\,J_1^5-240\,J_1^3\,J_2+60\,J_1^2\,J_3 + 90\,J_1\,J_2^2\cr& \hskip150pt{}
-20\,J_1^2-10\,J_1\,J_4-20\,J_2\,J_3+4\,J_2+J_5\bigr)\\
= 3^{1/3} ( - \:D_\pbal J_1 + J_1^2 + J_2) .}
$$

\Section3{Non  projectable actions and some applications}
%%%%%

We now turn our attention to the important case, arising in image processing and computer graphics, of central and parallel projections of three-dimensional space curves to the two-dimensional plane. Central projections model pinhole cameras, while parallel projections provide  a good approximation for a pinhole camera when the distance between a camera and
an object is significantly greater than the object depth, \cite{HZ}. 
The formulation of \sect2 does not entirely cover these examples, since the associated group action of the affine group on $\R^3$ is not projectable. To handle such cases, in general, we  identify a  subgroup $H$ of the entire group $G$ that acts projectably with respect to a surjective map $\Pi_0$. Usually $H$ is chosen to be the maximal such subgroup. We then construct a family of surjective maps $\Pi_g\colon M\to N$ parameterized by  elements of $G$ and examine the relationship between differential $H$-invariants of submanifolds of $M$ and invariants of the  family of  projections of these submanifolds. In Section~\ref{families}, we describe this relationship in the general setting of abstract manifolds and group actions. In Sections~\ref{sect-P0} and~\ref{sect-Pc}, we specialize to the concrete case of the central projections of planar curves, while Sections~\ref{sect-P0-p} and~\ref{sect-Pc-p} treat the case of parallel projections.

%%%%%
\subsection{Non-projectable actions and induced families of maps} 
\label{families}

We start, as above, with a fixed surjective map $\Po\colon \M \to \N $, but now suppose that the group $G$ acts non-projectably on $\M $.  Assume further that there exists a (nontrivial) subgroup $H \subset G$ whose action is $\Po$-projectable.  In this situation, we  define a family of surjective maps and corresponding projectable subgroup actions.  

Recall, first, the \emph{adjoint} or conjugation action of a group on itself, denoted by
\Eq{Ad}
$$\Ad g\, (h) =  g\,h\, g^{-1} \qbox{for }g,h \in G.$$

\begin{theorem}\label{th-family}
Let $\Po\colon \M \to \N $  be a  surjective map.  Suppose that $G$ acts on $\M $ and, moreover,  $H\subset G$ is a proper subgroup whose action on $\M $ is $\Po$-projectable.  For each $ g\in G$,  define the $g$-transformed surjective map $\P _g=\Po\comp g^{-1}\colon \M \to \N $. Then the action of the conjugate subgroup 
$H_g = \Ad g\, (H) = g H g^{-1} \subset G$ 
is $\P _g$-projectable.
\end{theorem}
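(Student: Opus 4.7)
The plan is to verify the projectability condition in Definition~\ref{def-proj} directly by chasing fibers. The first step is to express the fibers of the transformed map $\P_g = \Po \comp g^{-1}$ in terms of the fibers of $\Po$. Since $z \in \P_g^{-1}\{\w\}$ iff $\Po(g^{-1}\cdot z) = \w$ iff $g^{-1}\cdot z \in \F_\w^{(0)} := \Po^{-1}\{\w\}$, we obtain the clean identification
\beq\label{fibergid}
\F_\w^{(g)} := \P_g^{-1}\{\w\} = g \cdot \F_\w^{(0)}.
\eeq
So the fibers of $\P_g$ are simply the $g$-translates of the fibers of $\Po$.

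Next, I would take an arbitrary element of the conjugate subgroup, $h_g = g\,h\,g^{-1} \in H_g$ with $h \in H$, and compute
$$h_g \cdot \F_\w^{(g)} = (g\,h\,g^{-1}) \cdot (g \cdot \F_\w^{(0)}) = g \cdot (h \cdot \F_\w^{(0)}).$$
Because $H$ acts $\Po$-projectably on $\M$, there exists $\bar\w \in \N$ (namely $\bar\w = h\cdot\w$ under the induced $H$-action on $\N$) such that $h \cdot \F_\w^{(0)} = \F_{\bar\w}^{(0)}$. Substituting back and using \eq{fibergid} once more yields
$$h_g \cdot \F_\w^{(g)} = g \cdot \F_{\bar\w}^{(0)} = \F_{\bar\w}^{(g)},$$
which is exactly the $\P_g$-projectability condition for $h_g$.

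Since $h \in H$ was arbitrary, this proves that every element of $H_g$ maps $\P_g$-fibers to $\P_g$-fibers, establishing the theorem. I expect no real obstacle here: the statement is essentially formal, reducing to the observation that conjugation by $g$ intertwines the $\Po$-fiber structure with the $\P_g$-fiber structure. It is perhaps worth noting in passing that the induced action of $H_g$ on $\N$ under $\P_g$ coincides, under this intertwining, with the induced action of $H$ on $\N$ under $\Po$, so no new quotient-group structure is introduced.
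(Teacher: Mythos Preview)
Your proof is correct and follows essentially the same approach as the paper's own argument. The only cosmetic difference is that the paper works pointwise --- showing that two points in a common $\P_g$-fiber remain in a common $\P_g$-fiber after applying $g\,h\,g^{-1}$ --- whereas you work with fibers as sets via the identification $\F_\w^{(g)} = g\cdot\F_\w^{(0)}$; both reduce to the same insertion of $g^{-1}g$ and the $\Po$-projectability of $H$.
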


\begin{proof} 
Assume that $\z,\zt \in \M $ belong to the same fiber of $\P _g$, namely:
\beq\label{eq-zz'} 
\P _g(\z) = \Po(g^{-1}\cdot \z) = \Po(g^{-1}\cdot \zt\,) = \P _g(\zt\,) .\eeq
Since the action of $H$ is $\Po$ projectable, \eq{eq-zz'} implies
$$\Po(h\:g^{-1}\cdot \z)=  \Po(h\:g^{-1}\cdot \zt\,) \ \qbox{for all} \
h\in H.$$
  Inserting  the identity element in the form  $g^{-1} g$ in the above equality, we obtain
$$\P _g( g\:h\:g^{-1}\cdot \z) =\Po(g^{-1} g\:h\:g^{-1}\cdot \z) =  \Po(g^{-1} g\:h\:g^{-1}\cdot \zt \,) =  \P _g (g\:h\:g^{-1}\cdot \zt \,),$$
which implies that the action of $H_g$ is $\P _g$-projectable.
\end{proof}
%%% 

%%%%%
\Rmk 
If $H_\N $ is the global isotropy group of the $\Po$-projection of the action of $H$ on $\N $, then $H_{\N,g} = \Ad g\, (H_\N) $ is the global isotropy group of  the $\P _g$-projection of the action of $H_g$ on $N$. (Keep in mind that, while $H_\N $ is a normal subgroup of $H$,  it need not be a normal subgroup of $G$.)  Setting $[H_g] =H_g/H_{\N,g}$, we can therefore express the $[H_g]$-differential invariants of the images of submanifolds under $\P _g$ in terms of the $H_g$-differential invariants on $\M $.

%%%%

We finally state a simple, but  useful relation between the pull-backs of functions under $\Po$ and $\P_g$:
\beq\label{eq-PoPg}\P_g\Upstar \,F(z)=(\Po \comp g^{-1})\Upstar F(z)=\Po\Upstar \,F(g^{-1}\cdot z),
\eeq
for any $F\colon\N\to\R$ and $z\in \M$.
%%%%%%%
 \begin{example} \rm Let $\M = \R^3$ and $\N = \R^2$. Consider the standard orthogonal  projection 
$\Po(x,y,z)=(x,y)$.  Let $G=\R \ltimes \R^3$ be a four-dimensional semi-direct product group, parametrized by $a,b,c,d$, that acts on $\M $ via the transformations
$$g\cdot (x,y,z)=(x+a\:z+b,\ y+c,\ z+d).$$
Although this action is not $\Po$-projectable, the translation subgroup  $H =\{(0,b,c,d)\}\cong \R^3 \subset G$, 
does act $\Po$-projectably.
The global isotropy subgroup $H_N$ is isomorphic to $\R$, and its fiber-preserving action is given by   $(x,y,z)\mapsto (x, y,z+d)$.  If $(X,Y)$ denote coordinates on $\N$, then the quotient group $[H]=H/H_N$ acts on $\N$ by translation: $(X,Y)\mapsto (X+b,Y+c)$.

In accordance with our general construction, we define the family of surjective maps $\P _g\colon \M\to \N$  by 
$$\P _g(x,y,z)=\Po (g^{-1} \cdot (x,y,z)) = (x-a\:z-b,y-c).$$
Since $H$ is a normal subgroup of $G$, its conjugate subgroups coincide, $H_g=H$, and thus all the surjective maps $\P_g$ are $H$-projectable. Moreover $H_{N,g}=gH_Ng^{-1}=H_N$, but its fiber-preserving action $ (x,y,z) \mapsto (x+a\:d, y,z+d)$ depends on $g$, or, rather, on the first parameter $a$ of $g$, since it parametrizes the cosets $gH$.   The $\P_g$ projection of the $[H]$-action to $\N$ is given by
$$(X,Y) \longmapsto (X+b-a\:d, Y+c) \qbox{for} (b,c,d) \in H.$$  
Observe that this family of $[H]$-actions are all translations, but parametrized by the value of $a$.

We assume, for simplicity, our space curves are given as graphs $\C=\bc{(x,y(x),z(x))}$.  Under the action of the translation subgroup $H$, the invariant differential operator is $\CD = D_x$, and the two first order differential invariants $y_x,z_x$ comprise a generating set for the entire differential invariant algebra. On the other hand, for a plane curve parametrized by $\left(X,Y(X)\right)$, the single differential invariant  $Y_X$ forms a generating set.
The map $\P _g$ projects the space curve $\left(x,y(x),z(x)\right)$ to the plane curve
\beq\label{eq-Pggamma}\P _g(\C)=\left(X(x),Y(x)\right)=\left(x-a\,z(x)-b, y(x)-c\right).\eeq
Moreover,
$$ (\P_g^{(1)})\Upstar Y_X =\frac{y_x }{1-a\,z_x}$$
provides the  relationship between the generating differential invariants of the space curve and its planar image evaluated at corresponding points. It can be obtained either by computing the first prolongation of \eq{eq-Pggamma}, or by using our general formula \eq{eq-PoPg}, which in this case amounts to  $\P_g^{(1)}\:\Upstar Y_X=(g^{-1})^{(1)}\cdot y_x$.  The appearance of the parameter $a$ is due to the non-projectability of the full action. 
We also note that the invariant one-form $\omega = dX$ on $\N$ is pulled back via $\P _g$ to the $H_g$-contact-invariant horizontal differential form
$$\P _g\Upstar  \omega \equiv \lomega =  (1-a\,z_x)\,dx ,$$
again depending upon the parameter $a$ that determines the conjugacy class of $g$.
Theorem \ref{th-DI} then enables us to determine relations between the higher order differential invariants by applying the dual total invariant differential operator 
$$\CE = \frac{d}{dX}=\frac 1 {1-a\,z_x}\>\frac{d}{dx},$$
in accordance with formula \eq{CDI}.  

\end{example}

   %%%%%%
When the subgroup $H \subset G$ is not normal, the following proposition relating moving frames and  invariants under the adjoint action of $G$ on $H$ will be useful.

%%%
\begin{proposition}\label{prop-conj-inv} 
Let $G$ act on $\M$, and let $H\subset G$ be a subgroup. Given a fixed element $g\in G$, let $H_g = \Ad g\, (H) = g\, H\,g^{-1}$ denote the conjugate subgroup.
\begin{enumerate}
\item If $I\colon \M\to\R$ is an $H$-invariant function then $ I_g =I\comp g^{-1}$ is an $H_g$-invariant function. 
\item If $\rho\colon \M\to H$ is the moving frame for the $H$-action corresponding to the cross-section $\K \subset \M$, then  $\rho_g\colon \M\to H_g$ defined by  
$$\rho_g(z)=\Ad g\, \comp \rho(g^{-1}z) = g\cdot \rho(g^{-1}z) \cdot g^{-1}$$ 
is the moving frame for the $H_g$-action corresponding to the transformed cross-section  $\K_g =g\cdot \K $.
\item If $\inv(F)(z) =F(\rho(z) \cdot z)$ is the $H$-invariantization, corresponding to the cross-section $\K$,  of the function $F\colon \M\to\R$ then   
\beq\label{tildeiota}
\inv_g(F)(z)=F\left(\rho_g(z)\cdot z\right)= F\left(g\cdot \rho(g^{-1}z)\cdot  g^{-1} z\right ) =\inv(F\comp g)(g^{-1}z)
\eeq 
is the invariantization of $F$ for the $H_g$-action, corresponding to the cross-section $\K_g$. 
\end{enumerate}
\end{proposition}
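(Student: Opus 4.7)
The plan is to verify all three parts by direct computation, using only the conjugation identity $H_g=gHg^{-1}$ together with the defining properties of $\rho$ for the $H$-action: the cross-section condition $\rho(z)\cdot z\in\K$ and the right equivariance $\rho(h\cdot z)=\rho(z)\,h^{-1}$ implicit in the invariantization formula $\inv(F)(z)=F(\rho(z)\cdot z)$, \cite{FOmcII}.

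For part (i), any element of $H_g$ has the form $h_g=ghg^{-1}$ for some $h\in H$, so using $I_g=I\comp g^{-1}$ together with the $H$-invariance of $I$,
\[
I_g(h_g\cdot z)=I(g^{-1}\,ghg^{-1}\cdot z)=I(h\cdot g^{-1}z)=I(g^{-1}z)=I_g(z),
\]
which is the required $H_g$-invariance.

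For part (ii), the two defining properties of a moving frame must be checked for $\rho_g$ relative to $\K_g=g\cdot\K$. The cross-section property follows from
\[
\rho_g(z)\cdot z=g\cdot\bigl(\rho(g^{-1}z)\cdot g^{-1}z\bigr)\in g\cdot\K=\K_g,
\]
since $\rho(g^{-1}z)\cdot g^{-1}z\in\K$ by construction. Right $H_g$-equivariance is the computation
\[
\rho_g(h_g\cdot z)=g\,\rho(g^{-1}\,ghg^{-1}\cdot z)\,g^{-1}=g\,\rho(hg^{-1}z)\,g^{-1}=g\,\rho(g^{-1}z)\,h^{-1}\,g^{-1}=\rho_g(z)\,h_g^{-1},
\]
where the third equality invokes the equivariance of $\rho$. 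These two properties characterize the moving frame for the $H_g$-action associated to $\K_g$.

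Part (iii) is then a direct substitution into the definition $\inv_g(F)(z)=F(\rho_g(z)\cdot z)$:
\[
\inv_g(F)(z)=F\bigl(g\cdot\rho(g^{-1}z)\cdot g^{-1}z\bigr)=(F\comp g)\bigl(\rho(g^{-1}z)\cdot g^{-1}z\bigr)=\inv(F\comp g)(g^{-1}z),
\]
which is exactly \eq{tildeiota}. The only subtle point in the entire argument is keeping straight the left/right equivariance convention for $\rho$; once that is fixed consistently with the invariantization formula, each step amounts to unwinding definitions, and no serious obstacle arises.
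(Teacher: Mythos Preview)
Your proof is correct. The paper in fact states this proposition without proof, passing directly to a cautionary remark about the identity \eq{tildeiota}; the authors evidently regard all three parts as routine verifications, and your argument supplies precisely those checks. The only point worth a second glance is the equivariance convention for $\rho$, which you flag yourself; your computations are consistent with the right moving frame convention used throughout the paper, so nothing further is needed.
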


\emph{Warning\/}: Since equation \eq{tildeiota} can be summarized by  
$$\inv_g(F)=\inv(F\comp g)\comp g^{-1} \quad  \text{ or, equivalently,  } \quad  \inv_g(F\comp g^{-1})=\inv(F)\comp g^{-1},$$
  it is important to underscore that  $\inv(F\comp g)\neq \inv(F)\comp g$. Indeed, 
$$\inv(F\comp g)(z)=F(g\cdot \rho(z)\cdot z), \qbox{while} \inv(F)\comp g(z)=F(\rho(g\cdot z)\cdot (g\cdot z)).$$

%%%%%
\subsection{Central projections from the origin}
\label{sect-P0}
%%%%

In this and the following section, we specialize the preceding results to the case of central projections.  We begin by assuming the center of the projection is at the origin.
Let $\rr = (x,y,z)$ be  the standard coordinates on $\M  = \R^3$ and $\w = (X,Y)$ be the standard coordinates on $\N  = \R^2$.
Let $\Jk(\M ,1)$ denote the $k$-th order jet space associated with space curves $\C \subset \M$.  Treating $x$ as the independent variable, the corresponding jet coordinates are denoted by  $\rr^{(k)} = (x, y, z, y_1,z_1,\dots, y_k,z_k)$, where $y_i, z_i$ correspond to the $i$-th order derivatives of $y,z$, respectively, with respect to $x$.   Similarly, let $\Jk(\N ,1)$ denote the $k$-th jet space of plane curves, with coordinates $\w^{ (k)} = (X,Y, Y_1,\dots, Y_k)$, where $Y_i$ corresponds to the $i$-th order derivative of $Y$  with respect to $X$.

 Let us first consider the case of central projection, centered at the origin, from  $\M = \{ (x,y,z)\,|\,z \neq 0\} \subset \R^3$
 to the plane $\N \simeq \R^2$ defined by $z=1$. We will work in the coordinate system on the image plane provided by the first two coordinate functions on $\M $, i.e. $X(x,y,1)=x$ and  $Y(x,y,1)=y$. 
  As in \eq{eq-P0Z}, the central projection map $\Po\colon\M \to\N=\R^2 $ is thus explicitly given by
 \beq\label{P0}
 (X,Y)=\Po(x,y,z)=\left(\frac {x}{ z}, \ \frac {y}{ z}\right).
 \eeq
 As we noted in Example~\ref{ex-cp}, the linear action of $G=\GL3$ on $\M$ is  $\Po$-projectable and induces the projective action of $[G]=\PGL3$ on $\N \subset \RP^2$ given by \eq{sect-inv-diff-forms}.

\Rmk The centro-affine action of the linear group on $\M $ is $\Po$-projectable, because linear maps take central projection fibers to fibers. On the other hand, translations do not respect the fibers, and hence, the action of the translation subgroup $\R^3$, as well as the action of the full affine group $\AG3$, is \emph{not}  $\Po$-projectable, and does not project to a well-defined action on $\N $.    The quotient $\AG3/\GL3$ parametrizes the family of central projections considered in \sect{sect-Pc}. 

%%%

Our goal is to relate the projective differential invariants of the projected curve to the centro-affine differential invariants of the originating space curve.
Let $A\ps k$ denote the prolongation of the linear map induced by $A \in \GL3$ to the $k$-th jet space $\Jk(\M ,1)$. Similarly, the prolonged action of $[A] \in \PGL3$
on $\Jk(\N ,1)$ will be denoted by $[A]\ps k$. 
Applying the transversality condition \eq{DPmax} to \eq{P0}, we conclude that a jet $\yk \in \Jtok(\M,1)$ is $\Po$-regular if and only if the total derivative matrix
 \beq\label{cproj-reg} 
D\P_0 = \left(D_x\left(\frac {x}{ z}\right), \ D_x\left(\frac {y}{ z}\right)\right)  = \left(\frac {z - x\,z_x}{ z^2}, \ \frac {y_x \,z - y\, z_x}{ z^2}\right)
 \eeq
has rank $1$, which requires that the two numerators, $z - x\,z_x,\ y_x \,z - y\, z_x$, cannot simultaneously vanish.  Geometrically, this implies that the curve intersects the fibers, i.e.~the rays through the origin, transversally.

Let $\C \subset \M$ be a smooth space curve parametrized by $(x,y(x),z(x))$.  Its projection  $\Cp =\Po(\C)\subset\N $ has induced parametrization
 \beq\label{eq-p0} \left(X(x),Y(x)\right)=\left(\frac {x}{ z(x)}, \ \frac {y(x)}{ z(x)}\right).\eeq
The explicit formulae for the   $k$-th prolongation $\Pk_0\colon \Jtok(\M,1)\to \Jk(\N,1)$  are  given inductively by
 $$
 \qeq{X=\frac {x}{ z},\\ Y=\frac {y}{ z},\\ Y_1=\frac{D_x Y}{D_x X}=\frac{y_x \,z - y\, z_x}{z - x\,z_x},\\  Y_i=\frac{D_x Y_{i-1}}{D_x X} =\frac{z^2\,D_x Y_{i-1}}{z - x\,z_x},\\ i>1, }$$ 
on the open subset of  $\Jtok(\M,1)$ where  $z - x\,z_x\neq 0$.  Geometrically, the latter inequality requires that the space curve not be tangent to any plane of the form $z=c\,x$ for $c$  constant, and hence its projection not have a vertical tangent at the corresponding point.

%%%

Theorem \ref{th-dinv} immediately implies: 
 \begin{theorem} \label{th-inv0} If $I\colon \Jk(\N,1)\to\R$ is a differential invariant for the projective action of $\PGL3$ on $\N $, then $\Ih = I\comp \Pok\colon \Jk(\M,1)\to\R$ is a differential invariant for the centro-affine action of $\GL3$ on $\M $.
 \end{theorem}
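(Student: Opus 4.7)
The plan is to recognize Theorem~\ref{th-inv0} as a direct specialization of the general Theorem~\ref{th-dinv} to the concrete setting $(\M,\N,G,[G]) = (\{z\neq 0\},\R^2,\GL3,\PGL3)$ with the central projection $\Po$ given by \eq{P0}. My only task is therefore to check the three hypotheses of Theorem~\ref{th-dinv} in this setting and then invoke it verbatim.

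First, I would note that the coordinate formula \eq{P0} shows $\Po$ is smooth and, on $\M = \{z \neq 0\}$, of constant rank $2$, hence a surjective submersion onto $\N = \R^2$.  Second, I would appeal to Example~\ref{ex-cp}, which has already established the key algebraic facts needed here: the linear action of $G=\GL3$ on $\M$ is $\Po$-projectable (because a linear map sends each ray through the origin to another ray through the origin, and the fibers of $\Po$ are precisely such rays, punctured at the origin); the global isotropy subgroup on $\N$ is the scalar subgroup $G_\N = \{\lambda\,\mathrm I : \lambda \neq 0\}$; and the quotient $[G] = G/G_\N = \PGL3$ acts on $\N$ by the standard linear-fractional projective action \eq{eq-proj-l}.

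With these three ingredients verified, Theorem~\ref{th-dinv} applies, and its conclusion gives exactly Theorem~\ref{th-inv0}: for any differential invariant $I \colon \Jk(\N,1) \to \R$ of the prolonged $[G]=\PGL3$-action, the composition $\Ih = I \comp \Pok$ is invariant under the prolonged centro-affine $G=\GL3$-action on $\M$, because relation \eq{gnzn} specializes here to $[A]\ps k \cdot \Pok(\yk) = \Pok(A\ps k \cdot \yk)$ for all $A \in \GL3$ and all $\Po$-regular jets $\yk$.

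There is essentially no mathematical obstacle; the only point deserving a word of care is the domain of definition. Strictly speaking, $\Ih$ is defined on the open dense subset $\Jtok(\M,1) \subset \Jk(\M,1)$ of $\Po$-regular jets, characterized by the transversality condition following from \eq{cproj-reg}, namely that the numerators $z - x\,z_x$ and $y_x z - y\,z_x$ do not simultaneously vanish, i.e.\ the space curve is transverse to the rays through the origin.  On this open set the prolonged $\GL3$-action preserves $\Po$-regularity (since projectability survives prolongation), so the invariance equation $\Ih(A\ps k \cdot \yk) = \Ih(\yk)$ is meaningful and holds wherever both sides are defined.
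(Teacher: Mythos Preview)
Your proposal is correct and matches the paper's own approach exactly: the paper simply states that Theorem~\ref{th-dinv} immediately implies Theorem~\ref{th-inv0}, and you have spelled out precisely why its hypotheses are satisfied in this central-projection setting. Your additional remark about the domain $\Jtok(\M,1)$ is a welcome clarification that the paper leaves implicit.
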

\Rmk Theorem~\ref{th-inv0} remains valid if we replace $\Po$ with any projection centered \emph{at the origin} to an arbitrary plane, because the images  of a space curve under projections with the \emph{same} center are all related by projective transformations.
%%%

We now seek to express the projective curvature $\prcurv$ of the projected curve $\Po(\C)$ in terms of  centro-affine  differential invariants of $\C$. We begin by summarizing the equivariant moving frame calculations in \cite{Ocentro}.  We choose the cross-section to the prolonged centro-affine action on $\Jk(M,1)$ defined by the normalization equations 
\beq\label{eq-cs}
\weq{x = 0,\\y = 0,\\z = 1,\\y_1=0,\\z_1 = 0,\\y_2 = 1,\\z_2 = 0,\\y_3 = 0,\\y_4 = 3.}
\eeq
(The reason for this non-minimal choice of  the cross-section will be explained below.)  Replacing the jet coordinates $\rr ^{(k)}$ by their transformed versions $\widetilde {\rr ^{(k)}} = A\ps k \cdot \rr^{(k)}$ for $A\in\GL3$, and solving the resulting equations for the group parameters produces the moving frame $\rho \colon \J3(M,1) \to \GL3$.  The resulting normalized differential invariants are then obtained by invariantization of the higher order jet coordinates:
\Eq{IkJk}
$$\qxeq{I_k = \inv(y_k),\\J_k = \inv(z_k).}$$
The invariantization of the lower order jet coordinates used to define the cross-section produces the phantom invariants whose values coincide with moving frame normalization constants in \eq{eq-cs}:
\beq\label{eq-phdi}
\sxeq{I_0 = \inv(y) = 0,&J_0 = \inv(z) = 1,&I_1 = \inv(y_1)=0,&J_1 = \inv(z_1) = 0,\\I_2 = \inv(y_2) = 1,&J_2 = \inv(z_2) = 0,&I_3 = \inv(y_3) = 0,&I_4 = \inv(y_4) = 3.}
\eeq
The remaining normalized invariants, i.e.~$I_k$ for $k \geq 5$ and $J_l$ for $l \geq 3$ form a complete system of functionally independent differential invariants for the centro-affine action.

To write out the explicit formulas, as found in \cite{Ocentro}, 
we use 
$$\bp {\rr_1}{\rr_2}{\rr_3} = \tp {\rr_1}{\rr_2}{\rr_3}$$
 to denote the determinant of the $3\times 3$ matrix with the indicated (row) vectors, or, equivalently, their vector triple product. 
Suppose the space curve is parametrized by $\rr(t) = (x(t),y(t),z(t))$.
Let
\Eq{dscea}
$$ds = \Delta ^{1/3} \,dt,\qbox{where} \Delta = \bp \rr{\rr_t}{\rr_{tt}} $$
denote the \emph{centro-equi-affine arc length element} with corresponding invariant differentiation 
\Eq{Dscea}
$$ D_s = \frac1 {\Delta^{1/3}}\, D_t.$$
Thus, when parametrized in terms of arc length, the curve satisfies the unimodularity constraint 
\Eq{unimodula}
$$[\rr,\rr_s,\rr_{ss}]=1.$$

\Rmk We exclude singularities where $\Delta = 0$.  A space curve is \emph{totally degenerate} when $\Delta \equiv 0$ at all points; this is equivalent to the curve $\C\subset P_0$ being contained in the plane $P_0 = \ro{span}\{\rr(0),\rr_t(0)\}$ spanned by its initial position and velocity.

The \emph{centro-equi-affine curvature} and \emph{torsion} differential invariants are given by
\beq\label{ktcea}
\eeqo{\kappa = 
-\,\frac 12 D_t^2 \Pa{\frac1{\Delta ^{2/3}}} + \frac{\bp \rr{\rr_{tt}}{\rr_{ttt}}}{\Delta ^{5/3}} = \bp 
\rr{\rr_{ss}}{\rr_{sss}},\\ 
\tau =  \frac{\bp {\rr_t}{\rr_{tt}}{\rr_{ttt}}}{\Delta ^{2}} = \bp {\rr_s}{\rr_{ss}}{\rr_{sss}}.}
\eeq
 Note that $\kappa $ is a fourth order differential invariant\footnote{The second expression in these formulas is potentially misleading; keep in mind that the arc-length element \eq{dscea} involves second order derivatives of the curve's parametrization.}  while $\tau $ is a third order differential invariant. (This is in contrast to Euclidean curves, where torsion is the higher order differential invariant.) 
 
\emph{Warning}: We have switched the designation of $\kappa $ and $\tau $ from that used in \cite{Ocentro}, and also deleted a factor of $3$ in $\tau$ to slightly simplify the formulas. Our choice of notation is motivated  by the fact that the condition $\tau = 0$ is equivalent to the curve being contained in a plane $P \subset \R^3$, thus mimicking the Euclidean torsion of a space curve.

\bigskip

As in \cite{Ocentro}, differentiating \eq{unimodula} produces $[\rr,\rr_s,\rr_{sss}]=0$, which, when compared with \eq{ktcea}, produces the associated \emph{Frenet equation}
 \beq\label{eq-mv} \rr_{sss}=\tau\: \rr-\kappa \:\rr_s.\eeq
Consequently, the condition $\tau=0$ is equivalent to $\rr_{sss}$  and $\rr_s$ being collinear, while  $\kappa=0$ is equivalent to the collinearity of $\rr_{sss}$  and $\rr$.

Under uniform scaling $\rr \longmapsto \lambda \: \rr$ the centro-equi-affine differential invariants and arc length scale according to 
$$\qxeq{\kappa \longmapstox \lambda ^{-2} \:\kappa ,\\\tau \longmapstox \lambda ^{-3} \:\tau ,\\ds \longmapstox \lambda \, ds.}$$
Assuming that\footnote{If $\kappa < 0$ just replace $\sqrt \kappa \,$ by $\sqrt{-\, \kappa }\, $ in the formulas.} $\kappa > 0$, we can therefore take
\beq\label{kt-ca}
\qxeq{\kh = \frac{\kappa _ s}{\kappa ^{3/2}},\\
\th = \frac{\tau}{\kappa ^{3/2}}\,,}
\eeq
as the fundamental centro-affine differential invariants, with orders $5$ and $4$, respectively.
Similarly, the \emph{centro-affine arc length element} is
\beq\label{ds-ca}
d \caal = \sqrt\kappa \> ds = \inv (dx),
\eeq
with dual invariant derivative operator
\beq
D_\caal = \kappa^{-1/2} D_s = \frac1 {\Delta^{1/3}\,\sqrt\kappa} \, D_t.
\eeq

\Rmk There is a second independent fourth order differential invariant, namely
\beq\label{bh}
\bh = \frac{\tau_s}{\kappa^2} = \th_ \sigma + \fr32\,\kh\,\th.
\eeq
Note that both terms of the right hand side of this formula are of order $5$, and hence the terms involving fifth order derivatives cancel.   One could, alternatively, use $\th,\bh$ as generating differential invariants, although the resulting formulas become more complicated.  A similar observation applies to the pair of fourth order generating differential invariants 
\beq\label{kttt}
\qxeq{\widetilde \kappa = \frac{\kappa}{\tau ^{2/3}},\\\widetilde \tau = \frac{\tau_s}{\tau ^{4/3}},}
\eeq
which results from the minimal moving frame cross-section
$$\weq{x = 0,\\y = 0,\\z = 1,\\y_1=0,\\z_1 = 0,\\y_2 = 1,\\z_2 = 0,\\y_3 = 0,\\z_3 = 3.}$$
The fact that the generating invariants \eq{kt-ca} lead to the simplest formulae for the projective curvature of the projected space curve is one of the key reasons for our choice of non-minimal cross-section \eqref{eq-cs}.

The Replacement Theorem \eqref{Replacement} implies that if $\Ih \colon \Jk(\M,1) \to \R$ is any centro-affine differential invariant, then its explicit formula in terms of the normalized centro-affine invariants can be obtained by invariantizing  each of its arguments:
$$\ibeq{200}{\Ih(x,y,z,y_1,z_1,y_2,z_2,y_3,z_3,y_4,z_4,y_5,z_5,\ldots y_n,z_n) \\= I(0,0,1,0,0,1,0,0,J_3,3,J_4,I_5,J_5,\ldots ,I_n,J_n).}$$
Applying this result to the pull-back $\prcurvh = \prcurv \comp \Po\ps 7$ of the projective curvature invariant \eq{prcurv} produces the desired formula 
\beq\label{mh-proj1}
\prcurvh= \frac {3\:(I_5+10\:J_3)(2\:I_7+42\:J_5 - 105\:(I_5 + 4\:J_3)) - 7\: (I_6+15 \:J_4-45)^2}{6\,(I_5+10\:J_3)^{8/3}}
\eeq
that expresses the projective curvature of the central projection of a nondegenerate space curve in terms of its normalized centro-affine differential invariants \eq{IkJk}.  Alternatively, the moving frame recursion formulas, \cite{FOmcII,Ocentro}, can be employed to express the higher order normalized differential invariants in terms of invariant centro-affine derivatives of $\kh,\th$. Applying the general algorithm, we find
\Eq{carf}
$$\eeqo{
J_3 = \th,\\
J_4 = D_\caal J_3 + \f2\:I_5J_3 + 2\:J_3^2 
 = \th_\caal + \fr32\:\kh\:\th,\\
J_5 = D_\caal J_4 + \fr23\:I_5J_4 + \fr83\:J_3\:J_4 + 9\:J_3 
 = \th_{\caal\caal} + \fr32\:\kh_\caal\:\th + \fr72\:\kh\:\th_\caal + 3\:\kh^2\th + 9\:\th,\\
I_5 = 3\:\kh - 4\:\th,\\
I_6 = D_\caal I_5 + \f2\:I_5^2 + 2\:I_5J_3 -5\:J_4 + 45 
 = 3\:\kh_\caal - 9\:\th_\caal + \fr92\:\kh^2 - \fr{27}2\:\kh\:\th + 45,\\
I_7 = D_\caal I_6 + \fr23\:I_5 I_6 + \fr83\:I_6 J_3 + 21\:I_5 - 6\: J_5 - 60\:J_3  \\
= 3\:\kh_{\caal\caal} - 15\:\th_{\caal\caal} + 15\:\kh\:\kh_\caal - \fr{45}2\:\kh_\caal\:\th - \fr{105}2\:\kh\:\th_\caal + 9\:\kh^3 - 45\:\kh^2\th + 153\:\kh - 198\:\th,
}
$$
and so on.  One can, of course, easily invert these formulae to write  $\kh,\th$ and their derivatives in terms of the normalized differential invariants $I_k,J_k$. We note that $I_3$ and $J_5$ generate the differential algebra of invariants through the differential operator $D_\caal$.
 
The resulting formula for $\prcurvh$ has a particularly simple form if we set
\beq\label{ah}
\qxeq{ \ah = \kh + 2\:\th = \frac{\kappa _ s + 2\:\tau}{\kappa ^{3/2}}=\fr 1 3\, (I_5+10\,J_3)\,.}
\eeq
Namely,
{\beq\label{mh-proj}
\eeqo{\prcurvh = {3^{-2/3}}\,\left(\ah^{-5/3}\: \ah_{\caal\caal} - \fr76 \:\ah^{-8/3}\: \ah_{\caal}^2 + \fr32\:\ah^{-2/3}\: (\kh_{\caal} + \f4\: \kh^2 - 1)\right)
\\= {3^{1/3}}\,\left(-2\:\ah^{-1/2} D_\caal^2(\ah^{-1/6}) + \fr1 2\:\ah^{-2/3}\: (\kh_{\caal} + \f4\: \kh^2 - 1)\right) .}
\eeq}

As we discussed in Example~\ref{ex-cp2}, projective curvature is undefined for straight lines --- equivalently $Y_2\equiv 0$ --- and conics --- equivalently $A\equiv0$, where $A$ is given by \eq{A}. We have
\beq\label{eq-line-conic}{\P\pss5_0}(Y_2)=-\frac{z^3\,\Delta}{(x\,z_1-z)^3}\, \qbox { and } {\P\pss5_0}(\A)=\frac{27\, z^{15}\,\Delta^4}{(x\,z_1-z)^{12}}\, \,(\kappa_s+2\:\tau)\eeq
The first condition tells us that a space curve is projected to a line segment if and only if $ \Delta \equiv 0$ and hence, as we noted earlier, it lies on the plane passing through the origin. The second condition tells us that a curve  projects to a conic if and only if $\Delta\neq 0$ and $\kappa_s+2\tau \equiv 0$, which, assuming $\kappa \ne 0$, is equivalent to the vanishing of the differential invariant $\ah\equiv 0$.

Recall, \cite E, that, in general, a nondegenerate curve has all constant differential invariants if and only if it is (part of) the orbit of a one-parameter subgroup. For example, the twisted cubic $\C$, parametrized by $(x,x^2,x^3)$, has constant centro-affine curvature and torsion invariants $\kh=-{4}/{\sqrt 3}$, \ $\th ={2}/{\sqrt 3}$, and can be identified as an orbit of the one-parameter subgroup of diagonal matrices $\set{\rm{diag}(\lambda,\lambda^2,\lambda^3)}{\lambda\neq 0}$. Further, we note that the differential invariant \eq{ah} vanishes, $\ah=\kh+2\:\th=0$ on $\C$, reflecting the fact that the twisted cubic is projected to a parabola under $\Po$.

\Rmk\hskip-2mm\footnote{This remark is significantly changed in comparison with the version of the paper published in  Lobachevskii J.~Math.~{\bf 36} (2015), 260--285.  Several formulae are corrected and new formulae are inserted. To preserve the numbering in subsequent sections, we  added * to the additional formula tags in the remainder of this section.}  In Example~\ref{ex-cp2}, we introduced another invariant differential form, the pull-back of the projective arc length element \eq{pral}. We find that
\beq\label{pull-dsigma}d\pbal = \Po\pss 5 \,d\pral \equiv (3\,\ah)^{1/3 } \,d\caal = (3\,\alpha)^{1/3}\,ds,\eeq
where, as before, $\ah=\kh+2\:\th$, and we set 
$$\alpha=\ah\,\kappa^{3/2}=\kappa_s+2\,\tau,$$
 while $d\caal$  and  $ds$ are  given by  \eq{ds-ca} and \eq{dscea}, respectively. 
 
We  showed in Example~\ref{ex-cp2} that $\prcurvh$ and $\ihZ_1=\invh(Z_1)$, given by \eq{prcurv} and  \eq{eq-invtz1}, respectively, provide another generating set of centro-affine invariants under the invariant differentiation 
 $$D_\pbal=(3\, \alpha)^{-1/3} D_s=(3\,\ah)^{-1/3 } \,D_\caal,$$ and, therefore, can be expressed in terms of $\kh$ and $\th$. 
We of course, already have such an expression for $\prcurvh$, given by  \eq{mh-proj},
and can rewrite it in the alternative form using $D_\pbal\,$:
 \beq\label{mh-proj2}\prcurvh= \frac{\ah\,\ah_{\pbal\pbal}-\fr56\, \ah_\pbal^2}{\ah^2} \ + \ \frac32\;\frac{(3\,\ah)^{1/3 }\,\kh_{\pbal} + \f4\: \kh^2 - 1}{(3\,\ah)^{2/3 }}.
\eeq
We further find that
\beq\label{mh-proj2Z}
\ihZ_1=-\frac{\,\ah_\caal+\frac 3 2 \kh\,\ah}{3^{4/3}\,\ah^{4/3}}={-\frac 1 3 \,\frac{\ah_\pbal}{\ah}-\frac 1 6\,\, \frac{3^{2/3}\,\kh}{\,\ah^{1/3}}}.\eeq

On the other hand, $\prcurvh$ and $\itZ=\invt(Z)$, given by \eq{eq-invtz}, provide an alternative generating set of centro-equi-affine  invariants under the invariant differentiation $D_\pbal$. We can express these invariants in terms of $\kappa$ and $\tau$  {(or, rather $\kappa$ and $\alpha$)}  and their derivatives with respect to $D_s$. We find that 
\beq\label{eq-itZ}\tag{3.30*}\itZ= \frac 1{\left(3 \,\alpha\right)^{1/3}}\eeq
 and comparing with \eq{eq-invtz}, we observe that the expression $3z^3\alpha$, evaluated at a point on a space curve $\C$, equals $\eacurv_\eaal$, the derivative of the equi-affine curvature with respect to equi-affine arc-length evaluated at the corresponding point of its  projection.
 The formula for  $ \prcurvh$ becomes rather simple:
 \beq\label{prca}\tag{3.31*}\prcurvh= \frac{\alpha_{ss}\,\alpha-\fr76\,\alpha_{s}^2-\fr32\,\kappa\,\alpha^2\ostrut06}{ 3^{2/3} \,\alpha^{8/3}}\eeq
and can be compared with formula \eqref{prcurv} for the projective curvature in terms of the planar  equi-affine invariants.  If we replace $\alpha$ by  $\eacurv_\eaal$  and $\kappa $ by  $\eacurv$ in the above formula, we obtain a very similar formula to \eqref{prcurv} --- the  difference is in the overall factor and also in  the coefficient of the last term in the numerator. {In part this may be explained  by the fact that $\eacurv_\eaal=3z^3\alpha$, as observed above.   
The centro-affine invariant \eqref{mh-proj2Z} has a particular simple expression in terms of centro-equi-affine invariant $\itZ$, or, equivalently,  $\alpha$:
\beq\label{mh-proj2Zb}\tag{3.32*}\ihZ_1={\itZ_s}=-\frac{\alpha_s}{\left(3 \,\alpha\right)^{4/3}}\eeq}

{We finally note that we can also write
\beq\label{etahe}\tag{3.33*} \prcurvh = -3 \itZ \,\itZ_{ss} + \fr32 \itZ_s^2 - \fr32 \itZ^2\kappa 
= -6\, \itZ^{3/2} (\itZ^{1/2})_{ss} - \fr32 \itZ^2\kappa  = -6\, \itZ^{3/2} (D_s^2 + \f4\: \kappa )\itZ^{1/2}.\eeq
Alternatively, since
\beq\label{Ds}\tag{3.34*}D_s=\frac 1 \itZ\, D_\pbal,\eeq
we can rewrite \eq{etahe} as
%%%%%%
\beq\label{etahg}\tag{3.35*} \prcurvh = \fr32\,\itZ^{-2}\, (3 \,\itZ_\pbal^2-2\, \itZ\,\itZ_{\pbal\pbal}-\itZ^4\,\kappa)\eeq
and then solve for $\kappa$:
\beq\label{kappa}\tag{3.36*}\kappa=-\frac 1 3 \,\frac {2\, \prcurvh \,\itZ^2+6\, \itZ\,\itZ_{\pbal\pbal} -9\, \itZ_\pbal^2}{\itZ^4\ostrut{13}0} \eeq 
Formulae  \eq{eq-itZ}, \eq{prca}, \eq{Ds}, and \eq{kappa} give strikingly simple relationships between two natural generating sets of the differential algebra of centro-equi-affine invariants:\hfill\break
\vskip-10pt
\hglue1in (a) \ $\kappa$ and $\alpha$ under $D_s$;
\hskip1in (b) \ $\prcurvh$ and $\itZ$ under $D_\pbal$.
\hfill\break
\vskip-10pt
\noindent The first set is naturally expressed in terms of the position vector of a curve and its derivatives, while the second set has a natural relationship with the invariants of the image of the curve under projective and equi-affine actions on the plane. Indeed, recall that $\prcurvh$ and $d\pbal$ are the projective curvature and arc length element, respectively, of the image curve, while, from \eq{eq-invtz}, \eq{eq-c-can}, $\itZ={z\,\eacurv_\eaal^{-1/3}}$, where  $\eacurv_\eaal$ is the derivative of the equi-affine curvature with respect to the equi-affine arc length.}
%%%%%%

\subsection{Projections centered at an arbitrary point}
\label{sect-Pc}
%%%%%

We now consider more general central projections of space curves.
Let $\Pc$ be the central projection centered at the point $\ch=(c_1,c_2,c_3)$, mapping $\M = \{ (x,y,z)\,|\,z \neq c_3\} \subset \R^3$
 to the plane $N = \{z=1+c_3\} \simeq \R^2$. Explicitly,
\Eq{Pc}
$$(X,Y)=\Pc(x,y,z)=\left(\frac{x-c_1}{z-c_3}+c_1, \ \frac{y-c_2}{z-c_3}+c_2\right),$$
where $\Po$ given by \eq{P0} is the special case when $c_1=c_2=c_3=0$.

We denote the space translation by the vector $\ch$ as $T_{\ch}\colon \M \to\M $ and the plane translation by the vector $\c=(c_1,c_2)$ as  $\Tc\colon \N \to\N $. Clearly
\beq\label{eq-Pc}\Pc=\Tc\,\Po \,T_\ch^{-1}.\eeq 
Although the map \eq{eq-Pc} involves an extra transformation $\Tc$  that does not appear in the map $\P_g$ defined in Theorem~\ref{th-family}, an almost identical  proof implies that the action  of $\GL3_\ch = \Ad T_\ch (\GL3)$ on $M$ is $\Pc$-projectable. Explicitly:

 %%%%

 \begin{proposition}  \label{prop-stab2} For any  non-singular linear transformation $A\in \GL3$ acting on $\M $,
 $$ (T^{\vphantom1}_{\c}\,[A]\,\Tc^{-1})\, \Pc =\Pc\,(T^{\vphantom1}_\ch \,A\,T_\ch^{-1}),$$
where $[A]\in \PGL3$ is the corresponding projective transformation on $\N$.
\end{proposition}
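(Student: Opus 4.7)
The plan is to reduce this proposition to the $\Po$-projectability of the centro-affine action, which was already established in Example~\ref{ex-cp}, via the factorization $\Pc = \Tc\,\Po\,T_\ch^{-1}$ given in \eq{eq-Pc}. Since the statement is an identity between two maps $\M \to \N$, I would just chase it through as a chain of composition identities, no calculation in coordinates needed.

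First I would record the projectability relation: because $A \in \GL3$ acts $\Po$-projectably and projects to the projective transformation $[A] \in \PGL3$ on $\N$ as described in Example~\ref{ex-cp}, we have the key equality
\begin{equation*}
[A]\,\comp\,\Po \;=\; \Po\,\comp\,A,
\end{equation*}
which is precisely \eq{eq-star} specialized to the centro-affine action. Next I would start from the left-hand side of the proposed identity and substitute \eq{eq-Pc}:
\begin{equation*}
T_\c\,[A]\,\Tc^{-1}\,\Pc
\;=\; T_\c\,[A]\,\Tc^{-1}\,\bigl(\Tc\,\Po\,T_\ch^{-1}\bigr)
\;=\; T_\c\,[A]\,\Po\,T_\ch^{-1}.
\end{equation*}
Then I would apply the projectability relation to replace $[A]\,\Po$ by $\Po\,A$ and reinsert $T_\ch^{-1}\,T_\ch$ in the middle:
\begin{equation*}
T_\c\,[A]\,\Po\,T_\ch^{-1}
\;=\; T_\c\,\Po\,A\,T_\ch^{-1}
\;=\; T_\c\,\Po\,T_\ch^{-1}\,\bigl(T_\ch\,A\,T_\ch^{-1}\bigr)
\;=\; \Pc\,\bigl(T_\ch\,A\,T_\ch^{-1}\bigr),
\end{equation*}
where in the last equality I use \eq{eq-Pc} once more. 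This yields the right-hand side and completes the argument.

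There is no real obstacle here; the only thing one has to be careful about is distinguishing the two translations $\Tc$ (acting on $\N$ by the two-vector $\c = (c_1,c_2)$) from $T_\ch$ (acting on $\M$ by the three-vector $\ch = (c_1,c_2,c_3)$), and to note that neither commutes with $\Po$ individually — they only combine properly in the conjugated form $\Tc\,\Po\,T_\ch^{-1}$. Once that bookkeeping is set up, the proposition is simply the assertion that conjugation by $T_\ch$ intertwines $A$-projectability under $\Po$ with $T_\ch A T_\ch^{-1}$-projectability under $\Pc$, which is the same general mechanism already proved in Theorem~\ref{th-family} (the extra inner translation $\Tc$ on the target does not change the projectability argument since it is a diffeomorphism of $\N$).
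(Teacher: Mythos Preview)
Your proof is correct and follows essentially the same route as the paper: both rely on the key projectability relation $[A]\,\Po = \Po\,A$ and the factorization \eq{eq-Pc}, differing only in that the paper substitutes $\Po = \Tc^{-1}\,\Pc\,T_\ch$ into that relation directly while you start from the left-hand side and chase the compositions forward. Your version is simply a more explicit unpacking of the same one-line argument.
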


\begin{proof} The $GL(3)$-action on $M$ is $\Po$-projectable and, moreover, satisfies
\beq\label{lem-stab}
 [A] \,\Po =\Po\, A\ \ \qbox{for all} \ \ A \in \GL3.
  \eeq
We substitute   $\Po=\Tc^{-1}\,\Pc \,T_\ch$, obtained from \eq{eq-Pc}, to complete the proof.
\end{proof} 

As before, to determine the formulas for the induced projection $\Pck$ on curve jets, 
we choose a  representative smooth curve  $\C \subset \M$, parametrized by $(x,y(x),z(x))$, such that $\rr^{(k)} = \jk \C \at{\rr_0}$ at the point $\rr_0 = (x_0, y(x_0),z(x_0))$.  Then its central projection  $\Cp =\Pc(\C)\subset\N $ has parametrization
 \beq \label{eq-pc}\left(X(x),Y(x)\right)=\left(\frac {x-c_1}{ z(x)-c_3}+c_1, \ \frac {y(x)-c_2}{ z(x)-c_3}+c_2\right).\eeq
At the image point $\w_0 = \Pc(\rr_0) $, the projected curve jet is $\w^{(k)} = \Pk _\ch(\rr^{(k)}  ) = \jk \Cp\at{\Pc(\rr_0)}$.   Proposition~\ref{prop-stab2} and Theorem \ref{th-dinv} imply:

 %%%
\begin{theorem} \label{th-inv-c} If $I\colon \Jk(\N,1)\to\R$ is a differential invariant for the projective action of $\PGL3$ on $\N $, then $\Ih = I\comp \Pck\colon \Jk(\M,1)\to\R$ is a differential invariant for the translational conjugation  
$\GL3_\ch := \Ad T_\ch (\GL3)$ 
of the centro-affine action.
\end{theorem}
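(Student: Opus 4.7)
The plan is to reduce this theorem to Theorem~\ref{th-dinv} via Proposition~\ref{prop-stab2}, in complete analogy with the proof of Theorem~\ref{th-inv0}, the only new ingredient being the translational conjugation. First I would invoke Proposition~\ref{prop-stab2} directly: since, for every $A \in \GL3$, the identity $(T_\c\,[A]\,\Tc^{-1})\circ \Pc = \Pc\circ(T_\ch A\,T_\ch^{-1})$ holds, the conjugate group $\GL3_\ch = \Ad T_\ch\,(\GL3)$ maps $\Pc$-fibers to $\Pc$-fibers, i.e.\ acts $\Pc$-projectably on $M$ in the sense of Definition~\ref{def-proj}. Passing to $k$-jets via \eq{gnzn}, the induced projection $\Pck \colon \Jtk(\M,1) \to \Jk(\N,1)$ is equivariant with respect to the prolonged action of $\GL3_\ch$ on $\Jtk(M,1)$ and the prolonged action of the projected quotient group on $\Jk(N,1)$.

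Next I would identify this projected quotient. Proposition~\ref{prop-stab2} says that the element $T_\ch A\,T_\ch^{-1} \in \GL3_\ch$ projects under $\Pc$ to the transformation $T_\c\,[A]\,\Tc^{-1}$ of $\N$. Since $T_\c \in \PGL3$ (translations are projective), the projected group is
$$[\GL3_\ch] \;=\; T_\c\cdot \PGL3 \cdot \Tc^{-1} \;=\; \PGL3,$$
because conjugation by an element of $\PGL3$ is an inner automorphism. In particular, every $\PGL3$-invariant differential function $I\colon \Jk(\N,1)\to \R$ is automatically $[\GL3_\ch]$-invariant.

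Finally, applying Theorem~\ref{th-dinv} to the surjective map $\Pc$, the projectable group $G = \GL3_\ch$, and the $[G]$-invariant $I$, we conclude that $\widehat I = I\comp \Pck$ is a differential invariant of the prolonged action of $\GL3_\ch$ on $\Jtk(\M,1)$, which is the desired conclusion. I do not foresee any real obstacle: the entire content of the theorem is already encapsulated in Proposition~\ref{prop-stab2} together with Theorem~\ref{th-dinv}, and the only point that deserves a sentence of care is the observation that $T_\c \in \PGL3$, which ensures that conjugation by $\Tc$ preserves the class of $\PGL3$-invariants, so no genuinely new invariance has to be checked.
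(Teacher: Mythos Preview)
Your proposal is correct and follows essentially the same route as the paper, which simply states that Proposition~\ref{prop-stab2} and Theorem~\ref{th-dinv} imply the result. You have usefully made explicit the one point the paper leaves implicit, namely that $T_\c \in \PGL3$, so the projected action $T_\c\,\PGL3\,T_\c^{-1}$ coincides with $\PGL3$ and hence any $\PGL3$-invariant $I$ is automatically invariant under the projected quotient group.
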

 
  %%%
 \Rmk As before, Theorem~\ref{th-inv-c} remains valid if we replace $\Pc$ with \emph{a projection centered at $\ch$ to an arbitrary plane}, because the projected images of a space curve with the \emph{same} center are all related by projective transformations.

The pull-back $\prcurvh_{\,\ch} = \Pc\pss7\,\prcurv$ of the planar projective curvature \eq{prcurv} is a $\GL3_\ch\,$--invariant. According to \eq{eq-PoPg},  $\prcurvh_{\,\ch}=\prcurvh\comp T_\ch^{-1}$, where  $\prcurvh =\Po\pss7\,\prcurv$ can be expressed in terms of the normalized invariants for the centro-affine action on $\R^3$.
In particular, \eqf{mh-proj1} expresses $\prcurvh$ in terms of the normalized invariants $I_k,J_k$ corresponding to the cross-section $\K$ given by  \eq{eq-cs}.  Then, according to Proposition~\ref{prop-conj-inv}, the functions  $I_{\ch,k}=I_k \comp T_\ch^{-1}$ and $J_{\ch,k}=J_k \comp T_\ch^{-1} $ are $\GL3_\ch\,$-invariants obtained by  invariantization of  $y_k \comp T_\ch^{-1}$ and $z_k \comp T_\ch^{-1} $  relative to the cross-section: 

\beq\label{eq-csc}
\eeqo{\K_\ch=T_\ch\, (\K) \\= \{\qaeq{5}{x=c_1,\\y=c_2,\\z=1+c_3,\\y_1=0,\\z_1 = 0,\\y_2 = 1,\\z_2 = 0,\\y_3 = 0,\\z_3 = 1}\}.}
\eeq
{Taking into account that translations leave jet variables of the first order and higher invariant (i.e.~$y_k \comp T_\ch^{-1}=y_k$ and $z_k \comp T_\ch^{-1} =z_k$, for $k\geq 1$) we observe that $I_{\ch,k}$  and $J_{\ch,k}$ are, in fact, normalized invariants.}
The projective curvature $\prcurvh_{\,\ch}$ of the projected curve \eq{eq-pc} can then be re-expressed in terms of these invariants by simply replacing, in \eq{mh-proj1}, each $I_k,J_k$ with the corresponding invariant $I_{\ch,k},\,J_{\ch,k}$.

%%%%%
\subsection{The standard parallel projection}
\label{sect-P0-p}

%%%%

 By the \emph{standard   parallel projection}, we mean the orthogonal projection from $M = \R^3 $  to  the $xy$-plane $\N=\R^2 $. We use the coordinates $(X,Y)$ on the image plane that agree with the corresponding rectangular coordinates on $\R^3$, i.e., $X(x,y,0)=x$ and  $Y(x,y,0)=y$. The resulting parallel projection map $\Po\colon\M \to\N$ is explicitly given by
 \beq\label{P}
 (X,Y)=\Po(x,y,z)=\left(x, y\right).
 \eeq
 It is easily checked that the maximal $\Po$-projectable subgroup $H \subset G =\AG3$  consists of the transformations
 \beq\label{eq-pp0}
(x,y,z) \ \longmapsto \ (a_{11}\, x+a_{12}\,y+c_1,\  a_{21}\, x+a_{22}\,y+c_2,\ a_{31}\, x+a_{32}\,y+a_{33}\,z+c_3)
\eeq
 where $a_{33}(a_{11}\,a_{22}-a_{12}a_{21})\neq 0$.
 This action projects to the affine action
\beq \label{eq-a2} (X,Y)\ \longmapsto\ (a_{11}\, X+a_{12}\,Y+c_1,\ a_{21}\, X+a_{22}\,Y+c_2)
\eeq
on  $\N=\R^2$.
 The global isotropy group $H_N$ consists of the transformations
 \beq\label{eq-pp0HN}
(x,y,z) \ \longmapsto \ (x,\ y, \ a_{31}\,x+a_{32}\,y+a_{33}\,z+c_3)
\eeq
that fix the points on the $xy$ plane.

We now investigate the prolonged action on the curve jet spaces and the consequential differential invariants.  
If  $\C \subset \M$ is a smooth space curve parametrized by $(x,y(x),z(x))$, then its projection  $\Cp =\Po(\C)\subset\N $ has parametrization
 \beq\label{eq-pXY} \left(X(x),Y(x)\right)=\left(x, y(x)\right).\eeq
Applying the transversality condition \eq{DPmax} to \eq{P},  we see that all jets are $\Po$-regular,  and thus the prolongation $\Pok\colon \Jk(\M ,1)\to\Jk(\N ,1) $ is globally defined by
 $$
 \qxeq{X=x,\\ Y=y,\\ Y_1=\frac{D_x Y}{D_x X}=y_x ,\\  Y_i=\frac{D_x Y_{i-1}}{D_x X} =y_i,\\ i>1. }$$ 
This induces an obvious isomorphism between the algebra of fiber-wise constant differential invariants of the action \eq{eq-pp0} and the algebra of affine differential invariants of planar curves. 

We can use paradigm of \sect{sect-Pi-iota} to construct a cross-section on $\M$ that projects to the standard cross-section for the affine planar action:
\Eq{scsapa}
$$\K=\{\xeq{X=0,\\ Y=0, \\ Y_1=0, \\Y_{2}=1, \\Y_{3}=0,\\ Y_4=3}\}.$$ 
The moving frame invariantization associated to this cross-section produces the affine curvature invariant 
 \beq\label{eq-affcurv}\affcurv =\iota(Y_5)= 3\,\frac{\A}{\B^{3/2}}\,,\eeq
where $\A,\B$ are given by formulas \eq A, \eq{eacal}, respectively,
along with the contact-invariant arc-length element and its dual invariant differential operator 
\beq\label{eq-a2-ds}
\qqeq{{d\affal}=\piH\iota(dX)={\frac 1 3} \,\frac{\B^{1/2}}{Y_2}\,dX,\\\CD_\affal={3}\,\frac{Y_2}{\B^{1/2}}\,D_X={\inv(D_X)}.}
\eeq

The recurrence formulae 
then express the higher order normalized invariants in terms of invariant derivatives of the affine curvature \eq{eq-affcurv}, namely:
\Eq{affcurvderivs}
$$\qqeq{\iota(Y_6)= \nu_\sigma+ \f2\: \nu ^2 + 45, \\\iota(Y_7)= \nu_{\sigma\sigma} + \fr53\: \nu \, \nu_\sigma + \f3\: \nu ^3 + 51\: \nu , }$$
and so on.
These are all defined on the $\AG2$-invariant open subset of the jet bundle prescribed by the inequality $\B>0$. 

We note that the group $H$ acting on $\R^3$ by \eq{eq-pp0}  is a product of two groups, namely, $\widetilde H=\AG2$ acting by 
\beq
 \label{eq-a2-lift} 
(x,y,z)\ \longmapsto \ (a_{11}\, x+a_{12}\,y+c_1,\ a_{21}\, x+a_{22}\,y+c_2,\ z)
 \eeq
and  $H_N$ acting by
\beq\label{eq-GN}
(x,y,z)\ \longmapsto \ (x,\ y,\ a_{31}\, x+a_{32}\,y+a_{33}\,z+c_3).
\eeq 
The invariants of the $\AG2$-action \eq{eq-a2-lift}  can be obtained by lifting   the cross-section \eq{scsapa} to $\R^3$, producing 
\beq\label{tK}\Kt=\{\xeq{x=0,\\ y=0, \\ y_1=0, \\y_{2}=1,\\ y_{3}=0,\\ y_4=3}\}.\eeq
The corresponding normalized differential invariants $\invt (y_i)$, $i\geq 5$, are obtained by replacing the capital letters $Y$ and $X$ with their lower case versions $y$ and $x$, respectively, in \eq{eq-affcurv}, \eq{affcurvderivs}. The invariant differential form $d \widehat \affal =\pi_H\invt(dx)$ and dual invariant differential operator 
\beq
\label{eq-inv-diff-aff}\CD_{\widehat \affal} =\invt(D_x)=\frac{3\,y_2}{\sqrt{3\,y_2\,y_4-5\,y_3^2}}\,D_x\eeq  are also obtained  in the same manner from \eq{eq-a2-ds}.

We can also employ the recurrence formulae to determine the higher order differential invariants
\beq
\label{eq-a2-inv-1}
\eeqo{\invt (z) = z,\qquad \qquad \invt(z_1)=z_{\widehat \affal} = \frac{3\, y_2\,z_1}{\sqrt{3\, y_2\,y_4-5\,y_3^2}},\\ \invt (z_2)= z_{\widehat \affal \widehat \affal} + \f6\: \nu \, z_{\widehat \affal} = \frac{3\,y_2\,(3\,y_2\,z_2-y_3\,z_1)}{3 \,y_2\,y_4-5\,y_3^2}\,,\\
\invt(z_3) = z_{\widehat \affal \widehat \affal \widehat \affal} + \f2\: \nu \, z_{\widehat \affal \widehat \affal} + \bigl(\f6\: \nu_{\widehat \affal} + \f{18}\, \nu ^2 + 1\bigr)z_{\widehat \affal}   = \frac{27 \,y_2^2\,(y_2\,z_3-y_3\,z_2)}{(3 \,y_2\,y_4-5\,y_3^2)^{3/2}}\,,
\\
\invt(z_4)= z_{\widehat \affal \widehat \affal \widehat \affal \widehat \affal} + \nu \, z_{\widehat \affal \widehat \affal \widehat \affal} + \bigl(\fr23\: \nu_{\widehat \affal} + \fr{11}{36}\, \nu ^2 + 4\bigr) z_{\widehat \affal \widehat \affal} + \bigl(\f6\: \nu_{\widehat \affal\widehat \affal} + \fr7{36}\: \nu \: \nu_{\widehat \affal} + \f{36}\, \nu ^3 + \nu\bigr)z_{\widehat \affal}
\\
= \frac{81\,y_2^3\,(y_2\,z_4-2\,y_3\,z_3)+27\,y_2^2\,y_3^2\,z_2-(27\,y_2\,y_4-45\,y_3^2)\,y_2\,y_3\,z_1
}{(3 \,y_2\,y_4-5\,y_3^2)^{2}}\,,}
\eeq 
and so on.
The  algebra of differential invariants for the action \eq{eq-a2-lift}  is generated by the gauge invariant 
\beq\label{eq-affcurvh} \affcurvh = \invh(y_5) = 3\,\,\frac{9\,y_5\, y_2^2-45\,y_4\,y_3\,y_2+40\,y_3^3}{(3\,y_2\,y_4-5\, y_3^2)^{3/2}}
\eeq
 and the non-gauge invariant  $z$ through invariant differentiation under $\CD_{\widehat \affal}$.  

The prolongation of the  $H_N$-action  \eq{eq-GN} is given by
\beq\label{eq-prGN} 
\qeq{y_1\longmapsto y_1,\\ 
z_1 \longmapsto  a_{31}+a_{32}\,y_1+a_{33}\,z_1,\\
 y_i\longmapsto y_i,\\
 z_i \longmapsto  a_{32}\,y_i+a_{33}\,z_i, \\ i>1.}
\eeq
The action restricts to the lifted cross-section \eq{tK} as follows:
 \begin{eqnarray}\label{eq-prGN-K} 
\seq{\qeq{y_i\longmapstox y_i,\\ z\longmapstox a_{33}\,z+c_3,\\
 z_1\longmapstox a_{31}+a_{33}\,z_1,\\
 z_2\longmapstox a_{32}+a_{33}\,z_2,}\\
\qeq{
  z_3\longmapstox a_{33}\,z_3,\\
 z_4\longmapstox 3\,a_{32}+a_{33}\,z_4,\\
 z_i\longmapstox a_{32}y_i+a_{33}\,z_i,\\i>4.}}
\end{eqnarray} 
 
  We can follow the inductive approach of 
  \cite{kogan-03} 
  to express the invariants of the $H$-action \eq{eq-pp0} in terms of the invariants of the $\AG2$-action \eq{eq-a2-lift}.
We choose the cross-section $\widehat \K \subset  \widetilde\K$ to the action \eq{eq-prGN-K} of $H_N$ defined by
 $$\qaeq{13}{x=0,\\ y=0, \\ y_1=0,\\ y_{2}=1,\\ y_{3}=0,\\ y_4=3,\\ z=0,\\ z_1=0,\\z_2=0,\\z_3=1},$$ 
 which can be proven to also be a cross-section of the $H$-action on $\R^3$.   
The induced moving frame normalizations are
$$\qqeq{ a_{33}=\frac 1 {z_3},\\ a_{32}=-\frac {z_2} {z_3},\\ a_{31}=-\frac {z_1} {z_3},\\
    c_{3}=-\frac {z} {z_3}.}$$
 Using formulas (22) and (25) in   \cite{kogan-03}, we obtain the following  normalized invariants for the $H$-action
\beq
\qeq{\invh(y_i) = \invt (y_i),\\
\invh(z_4) = \frac{\invt(z_4)-3\,\invt (z_2)}{\invt (z_3)}\,,\\
\invh(z_i) = \frac{\invt(z_i)-\invt(z_2)\,\invt (y_i)}{\invt (z_3)}\,,\\ i>4}.
\eeq  
As expected $\invh(D_x)=\invt(D_x)=\CD_{\widehat \affal}$,  given by \eq{eq-inv-diff-aff}.
 The differential invariants of the $H$-action \eq{eq-pp0} are  generated by the gauge invariant $\affcurvh$, given by \eq{eq-affcurvh},
and the non-gauge invariant
$$
\invh(z_4) = 3\,\,\frac{y_2\,(y_2\,z_4-y_4\,z_2)-2\,y_3\,(y_2\,z_3-y_3\,z_2)}{(y_2\,z_3-y_3\,z_2)\,\sqrt{3\,y_2\,y_4-5\,y_3^2}},
$$
 through invariant differentiation under $\CD_{\widehat \affal}$.
%
%%%%%%%%%
\subsection{Family of parallel projections}\label{sect-Pc-p}
%%%%%%%
Using general framework of Section~\ref{families}, we now consider the family of parallel projections from $M =\R^3$ to the $x\,y$-plane $N =\R^2$ in the direction of the vectors $\b=(b_1,b_2,1)$.  We assume that the coordinate functions $(X,Y)$ on the image plane agree with the corresponding coordinates on $\R^3$, i.e.~$X(x,y,0)=x$ and  $Y(x,y,0)=y$. The resulting projection $$\Pb\colon\M=\R^3 \ \longrightarrow\ \N=\R^2 $$ is explicitly given by
 \beq\label{Pb}
 (X,Y)=\P_\b(x,y,z)=\left(x-b_1z,\ y-b_2z\right).
 \eeq
 Let $T_\b\in A(3)$ denote the linear transformation on $\R^3$ given  by  \beq\label{eq-b} 
 T_\b\colon  (x,y,z) \ \longmapsto \ (x + b_1z,\ y+b_2z,\ z).\eeq
  Obviously $\Pb= \Po\comp T_\b^{-1}$, and the  action of $H_\b=T_\b\,H\,T_\b^{-1}\subset A(3)$  is $\Pb$-projectable. 
  According to \eq{eq-PoPg}, the pull-back of the planar  affine curvature is given by
  \beq\label{aff-curv-b}\affcurvh_\b= \Pb\pss5\,\affcurv =\affcurvh\comp (T_\b^{-1})^{(5)}, \eeq
  where $\affcurv$ is given by \eq{eq-affcurv} and $\affcurvh$ is given by \eq{eq-affcurvh}. The resulting expression is rather complicated, involving $y_i, z_i$ for $1 \leq i \leq 5$, and $b_1,b_2$, and is obtained by replacing the $y_i$ in $\affcurvh$ with their pull-backs under the prolonged $T_\b^{-1}$-action. For instance, $y_2$ must be replaced with 
  $$y_2\comp (T_\b^{-1})^{(2)}=\frac{y_2\,(1-b_1z_1)+z_2\,(b_1y_1-b_2)}{(1-b_1z_1)^3}.$$
 On the other hand,  in accordance with \eqref{tildeiota} in Proposition~\ref{prop-conj-inv},  $\affcurvh_{\:\b}=\invh_{\!\b}(y_5 \comp {T_b^{-1}} )$, where $\invh_{\!\b}$ is the $H_\b$-invariantization corresponding to the cross-section $\Kh_\b = T_\b(\Kh)$ defined by 
 $$\qaeq{6}{x = 0,& y = 0,& y_1 = 0,& y_2 = 1,& y_3 = b_2,& y_4 = b_2z_4-4\:b_1+3, & z = 0, &z_1 = 0,& z_2 = 0,& z_3= 1.}$$
Combining \eqref{tildeiota} with the Replacement Theorem, we can compute explicit relations between normalized invariants for invariantizations   $\invh_{\!\b}$   and $\invh$. For example,\footnote{Formulae \eq{ind1} and \eq{ind2} do not appear in the version of the paper published in  {\it Lobachevskii J.~Math.} {\bf 36} (2015), 260--285. }  
\beq\label{ind1}\invh(y_5)  =\invh_{\!\b}( y_5 +5\,b_1\,z_4-b_2\, z_5) \comp {T_\b}^{(5)},\eeq
while
\beq\label{ind2}\invh_{\!\b}(y_5)=\invh(y_5-5\,z_4\,b_1+z_5\,b_2-10\,b_1\,b_2)\comp(T_\b^{-1})^{(5)}.\eeq
 
 Although explicit general formulae  for the invariants $ \affcurvh_{\b}$ become cumbersome, \eq{aff-curv-b} provides a useful relation between the invariants of a space curve $\C$ and the affine curvature of the images $\Cp_{\b}$    under parallel projections in various directions, as specified by the vector $\b=(b_1,b_2,1)$. These quantities are easily computable for \emph{a specific curve} $\C$ and could be of use in applications to the problem of reconstruction of an object from its various images.      
%%%%%

%%%%

\Section4{Conclusion and future work}

In this paper, we examined the relationships between the differential invariants  of objects and of their images  under a surjective map  $\P\colon M\to N$. Our analysis covers both the case when the underlying transformation group $G$ maps fibers of $\P$ to fibers, and therefore projects to a group action on $N$,  and the case when only a proper subgroup $H\subset G$ acts projectably. In the projectable case, we  established an explicit, constructible isomorphism between the algebra of differential invariants on $N$ and the algebra of fiber-wise constant (gauge) differential invariants on $M$. This isomorphism leads to explicit formulae for the invariants of the image of a submanifold  $S\subset M$ in terms of invariants of $S$. In particular, we expressed the projective curvature of a planar curve in terms of centro-affine invariants of its pre-image under the standard central  projection from $\R^3$ to $\R^2$.   In the non-projectable case, we introduced a family of surjective maps $\P_g$, parametrized by elements of $g\in G$, and then expressed the differential invariants of each $\P_g$-image of a submanifold of $S \subset M$ in terms of its $\Ad g\, (H)$-invariants which,  in turn, can be easily obtained from its  $H$-invariants.

Motivations for considering both projectable and non-projectable actions comes from basic problems arising in image processing: establishing relationships
between three-dimensional objects and their two-dimensional images and  reconstructing an object from  its various  images.  
In \cite{BurdisKogan, BuKoHo}, differential signatures of families of  planar curves were used to obtain a novel algorithm for deciding whether a given planar curve is an image of a given space curve, obtained by a central or a parallel projection with unknown parameters. 
In this paper, we  establish the relationship between differential invariants of a space curve  and its various projections.
In this context, further analysis of the effect of a surjective map on the associated  differential invariant signatures, used in object recognition and symmetry detection, \cite{COSTH}, is worth pursuing.
These results may also find applications in the problems of high dimensional data analysis, by studying projections of the data to lower dimensional  subspaces. 
Applying the methods developed in the paper to these problems is one of the directions of future research.

\vskip.3in

{\bf Acknowledgement:} Irina Kogan gratefully acknowledges the support and hospitality provided by the Institute for Mathematics and its Applications (IMA) at the University of Minnesota, where this work was initiated during her sabbatical stay in the 2013--14 academic year. 

%\vskip.63in
\newpage

\end{document}

\end{document}